\providecommand{\tabularnewline}{\\}
\numberwithin{equation}{section}
\numberwithin{figure}{section}
  \theoremstyle{plain}
  \newtheorem*{conjecture*}{\protect\conjecturename}
\theoremstyle{plain}
\newtheorem{thm}{\protect\theoremname}
  \theoremstyle{plain}
  \newtheorem{lem}[thm]{\protect\lemmaname}
  \theoremstyle{definition}
  \newtheorem{defn}[thm]{\protect\definitionname}
  \theoremstyle{plain}
  \newtheorem{conjecture}[thm]{\protect\conjecturename}
  \theoremstyle{remark}
  \newtheorem{rem}[thm]{\protect\remarkname}
  \theoremstyle{remark}
  \newtheorem*{rem*}{\protect\remarkname}
  \theoremstyle{plain}
  \newtheorem{prop}[thm]{\protect\propositionname}
\date{}
\newenvironment{conjbis}
{
  \edef\thmlabel{\thethm}
  \renewcommand{\thethm}{\thmlabel$'$}%
  \addtocounter{thm}{-1}%
   \begin{conjecture}}
  {\end{conjecture}}
\newcommand{\ad}{\operatorname{ad}}
\newcommand{\Ad}{\operatorname{Ad}}
\newcommand{\cpt}{\operatorname{cpt}}
\newcommand{\sems}{\operatorname{ss}}
\newcommand{\scon}{\operatorname{sc}}
\newcommand{\tor}{\operatorname{tor}}
\newcommand{\der}{\operatorname{der}}
\newcommand{\Hh}{\operatorname{H}}
\newcommand{\SL}{\operatorname{SL}}
\newcommand{\Hom}{\operatorname{Hom}}
  \providecommand{\conjecturename}{Conjecture}
  \providecommand{\definitionname}{Definition}
  \providecommand{\lemmaname}{Lemma}
  \providecommand{\propositionname}{Proposition}
  \providecommand{\remarkname}{Remark}
\providecommand{\theoremname}{Theorem}
\begin{document}

\title{Generic Representations in $L$-packets}

\author{Manish Mishra}

\curraddr{The Ruprecht-Karls-Universität Heidelberg, Im Neuenheimer Feld 288
Heidelberg 69120}

\email{manish.mishra@gmail.com}

\keywords{Generic representations, $L$-packets, unramified representations,
$R$-groups, tempered representations}

\subjclass[2000]{22E50, 20G25, 20F55, 20E42, 11R39}
\begin{abstract}
We give the details of the construction of a map to restate a conjectural
expression about adjoint group action on generic representations in
$L$-packets. We give an application of the construction to give another
proof of the classification of the Knapp-Stein $R$-group associated
to a unitary unramified character of a torus. Finally we prove the
conjecture for unramified $L$-packets.
\end{abstract}

\maketitle
\footnote{Research supported by ERC AdG Grant 247049}

\section{Introduction}

Let $G$ be a quasi-split connected reductive group defined over a
local field $k$ of characteristic zero and let $Z$ be the center
of $G$. Let $B$ be a $k$-Borel subgroup of $G$ and let $T$ be
a maximal $k$-torus in $B$. Let $U$ be the unipotent radical of
$B$. A character $\psi:U(k)\rightarrow\mathbb{C}^{\times}$ is called
\textit{generic} if the stabilizer of $\psi$ in $T(k)$ is exactly
the center $Z(k)$. An irreducible admissible representation $\pi$
of $G$ is called \textit{generic}\textbf{ }($\psi$-\textit{generic})
if there exists a generic character $\psi$ of $U(k)$ such that $\Hom_{G(k)}(\pi,\mbox{Ind}_{U(k)}^{G(k)}\psi)\neq0$. 

The conjectural \textit{local Langlands program} partitions the irreducible
admissible representations of $G$ into finite sets known as $L$-\textit{packets}.
Each $L$-packet is expected to be parametrized by an arithmetic object
called the \textit{Langlands parameter, }which is an \textit{admissible
homomorphism} from the \textit{Weil-Deligne} group $W_{k}^{\prime}$
of $k$ to the $L$-\textit{group }$^{L}G$ of $G$. See \cite{Borel}
for the definitions and statements.

To each Langlands parameter $\varphi$, one can associate a finite
group $\mathcal{S}_{\varphi}$ (see \cite[Section 1, eq. (1.1)]{arthur06}).
It is expected that the associated $L$-packet $\Pi_{\varphi}$ is
parametrized by the irreducible representations $\widehat{\mathcal{S}_{\varphi}}$
of $\mathcal{S}_{\varphi}$ \cite[Section 1]{arthur06}. The parametrization
will depend on the choice of a Whittaker datum for $G$, which is
a $G(k)$-conjugacy class of pairs $(B,\psi)$, where $\psi$ is a
generic character of $U(k)$. When $\Pi_{\varphi}$ is generic, i.e.,
it has a generic representation, the $\psi$-generic representation
in $\Pi_{\varphi}$ is then required to correspond to the trivial
representation of $\mathcal{S}_{\varphi}$. The parametrization is
also expected to satisfy certain conjectural endoscopic character
identity \cite{kaletha}. When $\varphi$ is a \textit{tempered }parameter,
i.e., a parameter whose image projects onto a relatively compact subset
of the complex dual $\hat{G}$ of $G$\textit{, }Shahidi's \textit{tempered
$L$-packet conjecture} \cite[\S 9]{sha90} states that $\Pi_{\varphi}$
must be generic.

Let $\Gamma_{k}$ be the absolute Galois group of $k$ and write H$^{1}(k,-)$
for H$^{1}(\Gamma_{k},-).$ In Section \ref{sec:Construction}, we
construct a map $\gamma_{\varphi}:R_{\varphi}:=\pi_{0}(Z_{\hat{G}}(\mbox{Im}(\varphi)))\rightarrow\mbox{H}^{1}(k,X(Z))$,
where $X(Z)$ is the character lattice of $Z$ and $\varphi$ is any
Langlands parameter. Using Tate duality, we get the dual map $\hat{\gamma}_{\varphi}:\mbox{H}^{1}(k,Z)\rightarrow\widehat{R_{\varphi}}$,
where $\widehat{R_{\varphi}}$ is the set of irreducible representations
of $R_{\varphi}$. Let $p:t\in T\rightarrow\bar{t}\in T_{\ad}:=T/Z$
be the adjoint morphism. The finite abelian group $T_{\ad}(k)/p(T(k))\hookrightarrow\Hh^{1}(k,Z)$
acts simply transitively on the set of $T(k)$-orbits of generic characters
\cite[\S 3]{DeRe10}. The map $\zeta_{\varphi}:=\hat{\gamma}_{\varphi}|T_{\ad}(k)/p(T(k))$
factors through $\widehat{\mathcal{S}_{\varphi}}$ (see \cite[Sec. 9(4)]{ggp}
, also \cite[Sec. 3]{kaletha}). 

Now fix a parametrization $\rho\in\widehat{\mathcal{S}_{\varphi}}\mapsto\pi_{\rho}\in\Pi_{\varphi}$
by making the choice of a Whittaker datum. The following is a version
of the conjecture in \cite[Sec. 9(3)]{ggp} for generic $L$-packets.
\begin{conjecture*}
A representation $\pi_{\rho}\in\Pi_{\varphi}$ is $\psi$-generic
iff $\pi_{t\cdot\rho}$ is $t\cdot\psi$ generic for all $t\in T_{\ad}(k)$,
where $t\cdot\rho:=\rho\otimes\zeta_{\varphi}(t)$.
\end{conjecture*}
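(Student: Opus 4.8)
The plan is to prove the conjecture in the unramified case. There $\varphi$ is an unramified tempered parameter, corresponding via the Satake isomorphism to a unitary unramified character $\chi$ of $T(k)$, and $\Pi_{\varphi}$ is the set of irreducible constituents of the unramified principal series $I(\chi)=\mathrm{Ind}_{B(k)}^{G(k)}\chi$. The first task is to reduce the assertion to a single identity of characters of the Knapp--Stein $R$-group $R_{\chi}$. The ingredients are: (i) for every generic character $\psi$ one has $\dim\Hom_{U(k)}(I(\chi),\psi)=1$, by the Bruhat-theoretic analysis of Whittaker functionals on a representation induced from the Borel (only the open cell contributes), so that exactly one constituent of $I(\chi)$ is $\psi$-generic; (ii) since $\chi$ is unramified, $R_{\chi}$ is abelian, $\mathrm{End}_{G(k)}(I(\chi))\cong\mathbb{C}[R_{\chi}]$, and the accompanying cocycle is trivial --- this is exactly the classification of $R_{\chi}$ re-established earlier in the paper by means of $\gamma_{\varphi}$; (iii) the Whittaker-normalized self-intertwining operators $\mathcal{A}_{\psi}(w,\chi)$ for $w\in R_{\chi}$, characterized by $\lambda_{\psi}\circ\mathcal{A}_{\psi}(w,\chi)=\lambda_{\psi}$, give a decomposition $I(\chi)=\bigoplus_{\rho\in\widehat{R_{\chi}}}\pi^{(\psi)}_{\rho}$ in which, by (i), the $\psi$-generic constituent is $\pi^{(\psi)}_{\mathbf{1}}$. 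Taking $\psi$ to be the generic character in the chosen Whittaker datum, writing $\pi_{\rho}:=\pi^{(\psi)}_{\rho}$, and using the identification of $\widehat{\mathcal{S}_{\varphi}}$ with $\widehat{R_{\chi}}$ underlying the parametrization of $\Pi_{\varphi}$, the conjecture becomes the following: for each $t\in T_{\ad}(k)$ there is a character $\eta_{t}\in\widehat{R_{\chi}}$ with $\pi^{(t\cdot\psi)}_{\rho}=\pi^{(\psi)}_{\rho\otimes\eta_{t}}$ for all $\rho$, and $\eta_{t}=\zeta_{\varphi}(t)$.

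The second step computes $\eta_{t}$ on the analytic side. Replacing $\psi$ by $t\cdot\psi$ rescales each root-subgroup component of $\psi$, so the two families of normalized operators differ by a ratio of Shahidi local coefficients; that ratio --- unlike the individual coefficients --- is a genuine character of $R_{\chi}$ by the multiplicativity (cocycle) relations for local coefficients, and tracing through the definitions identifies it with $\eta_{t}$ (up to an inverse, which is immaterial). Using that the local coefficient attached to $B$ is a product of rank-one abelian Tate $\gamma$-factors, one for each positive root sent to a negative root by $w$, and matching the Haar measures on the root groups, the $L$-factors and the absolute-value factors cancel in the ratio and one is left with $\eta_{t}(w)=\prod_{\alpha>0,\,w\alpha<0}(\chi\circ\alpha^{\vee})(\alpha(t))$, where for a root $\alpha$ contributing to the reducibility of $I(\chi)$ the character $\chi\circ\alpha^{\vee}$ is the unramified quadratic character responsible for that reducibility. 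In particular $\eta_{t}$ depends only on the class of $t$ in $T_{\ad}(k)/p(T(k))$, and on the reflections $s_{\alpha}$ generating $R_{\chi}$ it is given by the sign $\eta_{t}(s_{\alpha})=(\chi\circ\alpha^{\vee})(\alpha(t))$.

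The third step, which I expect to be the main obstacle, is to match this sign character with $\zeta_{\varphi}(t)$ on the dual side. This is essentially bookkeeping, but it has to correctly combine: the description of $R_{\varphi}=\pi_{0}(Z_{\hat{G}}(\mathrm{Im}\,\varphi))$ and of $\gamma_{\varphi}\colon R_{\varphi}\to\Hh^{1}(k,X(Z))$ from Section \ref{sec:Construction}; the connecting homomorphism $T_{\ad}(k)/p(T(k))\hookrightarrow\Hh^{1}(k,Z)$, which sends $t$ to the class of the cocycle $\sigma\mapsto\sigma(\tilde{t})\tilde{t}^{-1}$ for a lift $\tilde{t}\in T(\bar{k})$; and the Tate local duality pairing $\Hh^{1}(k,Z)\times\Hh^{1}(k,X(Z))\to\Hh^{2}(k,\mathbb{G}_{m})=\mathbb{Q}/\mathbb{Z}$ used to define $\hat{\gamma}_{\varphi}$. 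Concretely, one must show that for each reflection $s_{\alpha}$ generating a factor of $R_{\chi}$, pairing $\gamma_{\varphi}(s_{\alpha})$ against the Kummer-type class of $\alpha(t)$ returns the sign $(\chi\circ\alpha^{\vee})(\alpha(t))$ --- equivalently, that $\gamma_{\varphi}$ sends $s_{\alpha}$ to the quadratic class cut out by $\chi\circ\alpha^{\vee}$. Conceptually this ought to be automatic, since $\gamma_{\varphi}$ is constructed on the dual side precisely so as to record the reducibility data of $I(\chi)$; but making it airtight forces one to keep careful track of the Satake correspondence $\chi\leftrightarrow s$, of the ingredients entering the definition of $\gamma_{\varphi}$, and --- for general quasi-split $G$ --- of the Frobenius action on $\hat{T}$ together with the induced-torus structure of $T$. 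I would first carry this out for $G=\mathrm{SL}_{2}$, where $Z=\mu_{2}$, the Tate pairing is the Hilbert symbol, and $\gamma_{\varphi}$ visibly lands on the class of the unramified quadratic extension, and then propagate the computation to the general quasi-split case one rank-one root subgroup at a time.
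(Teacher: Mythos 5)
Your route is genuinely different from the paper's, and it is not complete. You reach for the Shahidi--Keys analytic machinery: Bruhat-cell analysis to get uniqueness of the $\psi$-Whittaker functional on $I(\chi)$, Whittaker-normalized self-intertwining operators indexed by $R_{\chi}$, and a comparison of the two resulting decompositions of $I(\chi)$ via a ratio of local coefficients. Even granting that this yields a character $\eta_t$ with the right properties, your step 3 --- identifying $\eta_t$ with $\zeta_{\varphi}(t)$ by unwinding the construction of $\gamma_{\varphi}$ through Tate duality --- is the whole content of the statement at the level of precision you are aiming for, and you explicitly stop short of carrying it out. ``Conceptually this ought to be automatic'' is not a proof; one has to actually show that $\gamma_{\varphi}(s_{\alpha})$ is the quadratic class of the unramified character $\chi\circ\alpha^{\vee}$, and that the Tate pairing against the Kummer class of $\alpha(t)$ reproduces the sign from the local-coefficient ratio, with the normalizations (Deligne's Artin map, Haar measures on root groups) all consistent. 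That is precisely the bookkeeping that makes this sort of argument hard to trust without seeing it done.

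There is also a scope mismatch. Your argument opens with ``$\chi$ a unitary unramified character,'' i.e.\ $\varphi$ tempered, which is what you need for the Knapp--Stein/local-coefficient apparatus to be available in the clean form you use. But the paper's result (Theorem \ref{thm:Un_gen_struct}) explicitly does not assume temperedness: the unramified $L$-packet is defined as the set of constituents of an unramified principal series admitting hyperspecial fixed vectors, and the proof is a short geometric argument. It writes $K=G(k)_x$ for a hyperspecial point $x$ in the apartment of $T$, observes (Casselman--Shalika) that $(\mathrm{Ind}_{U(k)}^{G(k)}\psi)^{G(k)_x}\neq 0$ iff $\psi|_{G(k)_x\cap U(k)}\equiv 1$, uses the author's earlier equivariance result $\pi_{t\cdot\rho}^{t\cdot[K]}\neq 0\iff\pi_{\rho}^{[K]}\neq 0$ from \cite{Mis13,Mis1}, and chains these through the $T_{\ad}(k)$-action on hyperspecial points. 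No intertwining operators, no local coefficients, no Tate-duality computation. Your approach, if pushed through, would give more refined analytic information (an explicit formula for $\eta_t$ in terms of $\chi\circ\alpha^{\vee}$), but it is strictly more work, covers strictly less (only the tempered case), and the hard step is left open, whereas the paper sidesteps all of it by outsourcing the equivariance to the hyperspecial-fixed-vector result.
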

The map $\hat{\gamma}_{\varphi}$ was constructed in \cite{Kuo10}
in a very special case ($G$ split semisimple and $\varphi$ is the
parameter associated to a unitary character of $T(k)$). For depth
zero supercuspidal $L$-packets, the conjecture follows from \cite{DeRe10}.
When $G$ is semi-simple and split and the $L$-packet is formed by
the constituents of a unitary principal series, the conjecture follows
from \cite{Kuo02}. In \cite{kaletha}, Kaletha gives a proof of the
above conjecture for classical groups using very general arguments. 

Now let $G$ be unramified and let $\varphi$ be the parameter associated
to a unitary unramified character $\lambda$ of $T(k).$ The construction
of the map $\gamma_{\varphi}$ allows one to obtain a nice description
of the group $R_{\varphi}$ as a certain subgroup of an extended affine
Weyl group (Proposition \ref{thm:R-gp}). Using this, in Theorem \ref{thm:mykeys},
we obtain in a conceptual and uniform way, the classification of the
Knapp-Stein $R$-group associated to $\lambda$. This kind of classification
was obtained by Keys \cite[\S 3]{keys} in a case by case manner.
For split groups, using different methods, another way of getting
the classification obtained by Keys was recently given by Kamran and
Plymen \cite{KP13}. Our situation is more general and we also describe
the isomorphism, which has a simple description.

Finally in Theorem \ref{thm:Un_gen_struct}, we prove the conjecture
for \textit{unramified $L$-packets} (see Sec. \ref{sec:packet_struct}).
We do not assume the packet to be tempered.

\section{\label{sec:Prelem}Preliminaries}

\subsection{\label{sub:GpCoho}Group Cohomology}

For details about this subsection, see \cite[Ch. 5]{serre}.

Let $\Gamma$ be a topological group and let 
\begin{equation}
1\rightarrow A\rightarrow B\rightarrow C\rightarrow1\label{eq:1}
\end{equation}
 be a short exact sequence of $\Gamma$-groups. Assume that $A$ is
central subgroup of $B$. Then $C$ acts on $B$ by inner automorphisms
and it acts trivially on $A$. Let $\gamma:\Gamma\rightarrow C$ be
a co-cycle in $C$, i.e., it satisfies the relation $\gamma(ab)=\gamma(a)\tensor*[^{a}]{\gamma}{}(b)$
for all $a,b\in\Gamma$. By twisting the short exact sequence in (\ref{eq:1})
by $\gamma$, we get another short exact sequence
\[
1\rightarrow A\rightarrow\tensor*[_{\gamma}]{B}{}\rightarrow\tensor*[_{\gamma}]{C}{}\rightarrow1
\]
 From this we get a long exact cohomology sequence
\begin{eqnarray*}
1 & \rightarrow & \mbox{H}^{0}(\Gamma,A)\rightarrow\mbox{H}^{0}(\Gamma,\tensor*[_{\gamma}]{B}{})\rightarrow\mbox{H}^{0}(\Gamma,\tensor*[_{\gamma}]{C}{})\rightarrow\mbox{H}^{1}(\Gamma,A)\rightarrow\\
 & \rightarrow & \mbox{H}^{1}(\Gamma,\tensor*[_{\gamma}]{B}{})\rightarrow\mbox{H}^{1}(\Gamma,\tensor*[_{\gamma}]{C}{}).
\end{eqnarray*}

\subsection{\label{sub:Affine}Affine roots and affine transformations}

\subsubsection{\label{subsub:Omeg}The group $\Omega$.}

Let $\Psi=(X,R,\Delta,\check{X},\check{R},\check{\Delta})$ be a based
root datum in the sense of \cite[1.9]{springer79}. So $X$ and $\check{X}$
are free abelian groups in duality by a pairing $X\times\check{X}\rightarrow\mathbb{Z}$,
$R$ is a root system in the vector space $Q\otimes\mathbb{R}$, where
$Q$ is the root lattice, $\check{R}$ is the set of co-roots, $\Delta\subset R$
is a basis and $\check{\Delta}$ is the dual basis. Let $W=W(\Psi)$
be the Weyl group. The set $\Delta$ determines an alcove $C$ in
$V:=X\otimes\mathbb{R}$ in the following way. Let $\check{\Delta}=\{\check{\alpha_{1}},\ldots,\check{\alpha_{l}}\}$
and let $\check{\beta}=\sum_{i=1}^{l}n_{i}\check{\alpha_{i}}$ be
the highest co-root. Then $C$ is the alcove in $V$ defined by $C=\{x\in V:\check{\alpha_{0}}(x)\geq0,\ldots,\check{\alpha_{l}}(x)\geq0\}$,
where $\check{\alpha_{0}}=1-\check{\beta}$. Let $\tilde{W}=W\ltimes X$
and $\tilde{W}^{\circ}=W\ltimes Q$. Let $\Omega$ be the stabilizer
of $C$ in $\tilde{W}$. Then $\tilde{W}=\Omega\ltimes\tilde{W}^{\circ}$. 

Now assume that $\Psi$ is \textit{semisimple} \cite[1.1]{springer79}
and $R$ is an irreducible root system in $V$. Let $c_{0}$ be the
\textit{weighted barycenter} of $C$, characterized by the equations
$\check{\alpha_{i}}(c_{0})=1/h$ for $i=0,\ldotp,l$, where $h$ is
the Coxeter number. For any $w\in W$, let $\tilde{w}$ be the affine
map $x\in V\mapsto w(x-c_{0})+c_{0}$. It is the unique affine map
fixing $c_{0}$ with tangent part $w$. The following lemmas follow
from \cite[Lemma 6.2]{AnYu}.
\begin{lem}
\label{lem:yu1}For any $w\in W$, the following are equivalent:\end{lem}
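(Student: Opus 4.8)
The plan is to prove the stated equivalences by running around a short cycle of implications, passing between three incarnations of the same phenomenon: the affine automorphism $\tilde w$ of $V$ that preserves the alcove $C$, the permutation of the bounding affine coroots $\check\alpha_0,\dots,\check\alpha_l$ that it induces, and the realization of $w$ as the tangent part of an element of the subgroup $\Omega$. The structural fact I would invoke repeatedly is the one already recorded: $\tilde w$ is the \emph{unique} affine automorphism of $V$ fixing $c_0$ whose tangent part is $w$. Hence $w\mapsto\tilde w$ is an injective homomorphism from $W$ into the affine automorphism group of $V$, and the content of the lemma is the description of which $w$ land in the stabilizer of $C$.

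For the geometric core I would argue as follows. The $l+1$ affine functions $\check\alpha_0,\dots,\check\alpha_l$ cut out the simplex $C$ and, by the defining property of the weighted barycenter, all take the common value $1/h$ at $c_0$. An affine automorphism fixing $c_0$ therefore stabilizes $C$ if and only if it permutes the supporting hyperplanes $\{\check\alpha_i=0\}$; and, since the $\check\alpha_i$ all agree at the fixed point $c_0$, in that case $\check\alpha_i\circ\tilde w^{-1}$ is exactly one of the $\check\alpha_j$, with no positive rescaling, giving a permutation $\sigma_w$ of $\{0,\dots,l\}$ that is functorial in $w$. This yields the equivalence of ``$\tilde w(C)=C$'' with ``$\tilde w$ permutes $\{\check\alpha_0,\dots,\check\alpha_l\}$.'' To bring $\Omega$ into the picture I would use $\tilde W=\Omega\ltimes\tilde W^{\circ}$: each $\omega\in\Omega$ stabilizes $C$ and permutes the $\check\alpha_i$ among themselves, hence preserves their common value $1/h$, hence fixes $c_0$, hence equals $\widetilde{w_\omega}$ where $w_\omega$ is its tangent part; so every such $w_\omega$ satisfies $\tilde w(C)=C$. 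Conversely, if $\tilde w(C)=C$ then — and this is the point — $\tilde w$ is an \emph{integral} affine transformation, i.e.\ $\tilde w\in\tilde W=W\ltimes X$, and being an element of $\tilde W$ that stabilizes $C$ it then lies in $\Omega$, so $w$ is the tangent part of an element of $\Omega$. The remaining translation of the wall-permutation condition into a purely root-theoretic statement on $w$ (that $w$ sends $\check\Delta\cup\{-\check\beta\}$ into itself compatibly with the normalization, equivalently realizes an automorphism of the extended Dynkin diagram lying in $\Omega$) is bookkeeping with the relation $\check\alpha_0=1-\check\beta$, keeping track of which affine coroot is the inhomogeneous one.

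The step I expect to be the real obstacle is precisely the integrality assertion used above: that an alcove-preserving affine automorphism of $V$ fixing $c_0$ and with tangent part in $W$ must actually belong to $\tilde W=W\ltimes X$ rather than merely to the a priori larger affine automorphism group of the simplex $C$. I would obtain this by importing \cite[Lemma 6.2]{AnYu}, where exactly the geometry of the weighted barycenter is exploited, and then checking the short dictionary between the setup there and ours — the identification $\check\alpha_0=1-\check\beta$, the value $1/h$ at $c_0$, and the description of $\Omega$ as the stabilizer of $C$ in $\tilde W$. With that in hand the cycle closes and the listed conditions are all seen to be equivalent.
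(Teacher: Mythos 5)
Your proposed cycle of implications routes through the auxiliary condition ``$\tilde w(C)=C$,'' and the step you flag as the ``real obstacle'' --- that $\tilde w(C)=C$ forces $\tilde w$ to be integral, i.e.\ $\tilde w\in\tilde W=W\ltimes X$ --- is in fact false, so the cycle does not close. For a counterexample take adjoint type $A_1$, so $X=Q=\mathbb Z\alpha$, $C=\{t\alpha:0\le t\le 1/2\}$ and $c_0=\alpha/4$. For the nontrivial $w=s$ one has $\tilde s(x)=-x+\alpha/2$, so $\tilde s(C)=C$, yet the translation part $\alpha/2$ does not lie in $X$; hence $\tilde s\notin\tilde W$, and consistently $\Omega=X/Q$ is trivial. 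More generally, the affine automorphism group of the simplex $C$ fixing $c_0$ has size independent of the lattice $X$ (it is governed by the extended Dynkin diagram), whereas membership in $\tilde W$ does depend on $X$, so no argument of the shape ``$\tilde w(C)=C\Rightarrow\tilde w\in\tilde W$'' can be right. The citation to \cite[Lemma 6.2]{AnYu} does not rescue this: that lemma asserts exactly the equivalence the paper states, not the stronger (and false) integrality assertion you attribute to it.

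The structural problem is that you never actually use the hypothesis $\tilde w\in\tilde W$. The nontrivial implication $(1)\Rightarrow(2)$ has a short direct proof that does use it: since $\tilde W$ acts on $V$ preserving the affine hyperplane arrangement, it permutes the open alcoves; $\tilde w$ fixes $c_0$, which lies in the interior of $C$; so $\tilde w(C)$ is an open alcove with $c_0$ in its interior, forcing $\tilde w(C)=C$, i.e.\ $\tilde w\in\Omega$. The converse $(2)\Rightarrow(1)$ is immediate from $\Omega\subset\tilde W$. Your geometric observations about the $\check\alpha_i$ all taking the value $1/h$ at $c_0$ and the resulting permutation of facets are correct and relevant to the companion Lemma \ref{lem:yu2}, but they are not needed here and they cannot substitute for invoking the integrality hypothesis $\tilde w\in\tilde W$ at the point where the alcove structure is used.
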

\begin{enumerate}
\item $\tilde{w}\in\tilde{W}$.
\item $\tilde{w}\in\Omega$.\end{enumerate}
\begin{lem}
\label{lem:yu2}There is an isomorphism $\iota:\Omega\rightarrow X/Q$
defined by any of the following ways\end{lem}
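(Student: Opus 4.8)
The plan is to realize $\Omega$ as a set of coset representatives for $\tilde{W}^{\circ}$ in $\tilde{W}$ and then to identify $\tilde{W}/\tilde{W}^{\circ}$ with $X/Q$; each of the formulas in the statement will then be a concrete reading of this single isomorphism.

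First I would invoke the standard fact about affine Weyl groups that $\tilde{W}^{\circ}=W\ltimes Q$ acts simply transitively on the set of alcoves in $V$. In particular, any element of $\tilde{W}^{\circ}$ stabilizing the alcove $C$ is trivial, so $\Omega\cap\tilde{W}^{\circ}=\{1\}$; combined with the decomposition $\tilde{W}=\Omega\ltimes\tilde{W}^{\circ}$ recorded in \S\ref{subsub:Omeg}, this says exactly that the composite $\Omega\hookrightarrow\tilde{W}\twoheadrightarrow\tilde{W}/\tilde{W}^{\circ}$ is an isomorphism of groups. Next, consider the map $\theta\colon\tilde{W}=W\ltimes X\to X/Q$ sending $(w,x)$ to $x+Q$. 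Since $(w,x)(w',x')=(ww',\,x+w(x'))$ and $w(x')\equiv x'\pmod{Q}$ for $x'\in X$ (because $\langle x',\check{\alpha}\rangle\in\mathbb{Z}$ for every $\check{\alpha}\in\check{R}$, so $w(x')-x'\in Q$), the map $\theta$ is a surjective homomorphism with kernel $W\ltimes Q=\tilde{W}^{\circ}$, and therefore descends to an isomorphism $\tilde{W}/\tilde{W}^{\circ}\xrightarrow{\sim}X/Q$. Composing the two isomorphisms gives $\iota\colon\Omega\xrightarrow{\sim}X/Q$, and on $\omega=(w,\lambda)\in\Omega$ it reads $\iota(\omega)=\lambda+Q$.

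It then remains to check that this agrees with the remaining descriptions. For $\omega=(w,\lambda)\in\Omega$ the affine transformation $\omega$ sends $0$ to $\lambda$, so $\iota(\omega)$ is the class mod $Q$ of $\omega(0)$, the image under $\omega$ of the origin (a vertex of $C$). Moreover each $\omega\in\Omega$ fixes the weighted barycenter $c_{0}$: it permutes the walls $\check{\alpha}_{i}=0$ of $C$ and hence fixes the unique point at which all the $\check{\alpha}_{i}$ equal $1/h$. So $\omega=\tilde{w}$ in the notation of \S\ref{subsub:Omeg}, with $w$ the linear part of $\omega$, whence the translation part of $\omega$ is $c_{0}-w(c_{0})$; this coincides with $\lambda=\omega(0)$ and lies in $X$ by Lemma \ref{lem:yu1}. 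Hence $\iota(\omega)$ is equally the class mod $Q$ of $c_{0}-w(c_{0})$, and all the displayed descriptions agree.

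The only input that is not formal manipulation of the two semidirect-product decompositions of $\tilde{W}$ is the simple transitivity of $\tilde{W}^{\circ}$ on alcoves (equivalently $\Omega\cap\tilde{W}^{\circ}=\{1\}$), together with the verification that every $\omega\in\Omega$ fixes $c_{0}$; I expect these to be the only points requiring care. In fact the whole statement is subsumed by \cite[Lemma 6.2]{AnYu}, which is the reference I would ultimately give.
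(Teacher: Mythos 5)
Your argument is correct and essentially the standard proof of this classical fact about extended affine Weyl groups; the paper itself gives no proof, deferring entirely to \cite[Lemma~6.2]{AnYu}, so your write-up fills in what the paper cites away. One small point you pass over: your computation produces the translation part $(1-w)c_{0}=c_{0}-w(c_{0})$, whereas the lemma's formula reads $(w^{-1}-1)c_{0}$; these agree modulo $Q$ because $(w^{-1}-1)c_{0}=w^{-1}\bigl((1-w)c_{0}\bigr)$ and $w^{-1}\mu\equiv\mu\pmod{Q}$ for every $\mu\in X$, but it is worth saying so explicitly rather than just asserting that the descriptions agree. (Also, the appeal to Lemma~\ref{lem:yu1} to conclude $\lambda\in X$ is unnecessary, since $\omega\in\Omega\subset W\ltimes X$ already gives it.)
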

\begin{enumerate}
\item $\iota(\tilde{w})=(w^{-1}-1)c_{0}+Q$.
\item The natural projection $\tilde{W}\rightarrow\tilde{W}/\tilde{W}^{\circ}=X/Q$
restricted to $\Omega$. 
\end{enumerate}

\subsubsection{\label{subsub:Basedroot}Based root datum}

Let $\Psi=(X,R,\Delta,\check{X},\check{R},\check{\Delta})$ be a reduced
based root datum. Let $\theta$ be a finite group acting on $\Psi$.
In \cite{Yu}, Jiu-Kang Yu defines the following $6$-touple $\underline{\Psi}=(\underline{X},\underline{R},\underline{\Delta},\check{\underline{X}},\check{\underline{R}},\check{\underline{\Delta}})$:
\begin{eqnarray*}
\underline{X} & = & X_{\theta}/\mbox{torsion},\\
\check{\underline{X}} & = & \check{X}^{\theta},\\
\underline{R} & = & \{\underline{a}:a\in R\},\qquad\mbox{ where }\underline{a}:=a|_{\check{\underline{X}}}\\
\check{\underline{R}} & = & \{\check{\alpha}:\alpha\in\underline{R}\},\\
\underline{\Delta} & = & \{\underline{a}:a\in\Delta\},\\
\check{\underline{\Delta}} & = & \{\check{\alpha}:\alpha\in\underline{\Delta}\}.
\end{eqnarray*}

The explanation for the defining formulas is as follows. We first
note that $\underline{X}$ and $\check{\underline{X}}$ are free abelian
groups, dual to each other under the canonical pairing $(\underbar{\ensuremath{x}},y)\mapsto<x,y>$,
for $\underbar{\ensuremath{x}}\in\underline{X}$, $y\in\check{\underline{X}}\subset\check{X}$,
where $x$ is any preimage of $\underbar{\ensuremath{x}}$ in $X$.
Define $\check{\alpha}$ for $\alpha\in\underline{R}$ as follows:
\begin{equation}
\check{\alpha}=\begin{cases}
\underset{a\in R:a|_{\check{\underline{X}}}=\alpha}{\sum\check{a},} & \mbox{if }2\alpha\notin\underline{R}\\
\underset{a\in R:a|_{\check{\underline{X}}}=\alpha}{2\sum\check{a},} & \mbox{if }2\alpha\in\underline{R}
\end{cases}
\end{equation}
In \cite{Yu}, Jiu-Kang Yu proves the following.
\begin{thm}
\label{thm:yu}\textnormal{[Jiu-Kang Yu]}The 6-touple $\underline{\Psi}=(\underline{X},\underline{R},\underline{\Delta},\check{\underline{X}},\check{\underline{R}},\check{\underline{\Delta}})$,
with the canonical pairing between $\underline{X}$ and $\check{\underline{X}}$
and the correspondence $\underline{R}\rightarrow\check{\underline{R}}$,
$\alpha\mapsto\check{\alpha}$, is a based root datum. Moreover, the
homomorphism $W(\Psi)^{\sigma}\rightarrow\mathrm{\textnormal{\textbf{GL}}}(\check{\underline{X}})$,
$w\mapsto w|_{\check{\underline{X}}}$ is injective and the image
is $W(\underbar{\ensuremath{\Psi}})$.
\end{thm}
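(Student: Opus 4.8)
The plan is to prove both assertions at once by reducing to an irreducible root datum on which $\sigma$ acts through Dynkin diagram automorphisms, and then, for each $\alpha\in\underline{\Delta}$, exhibiting a $\sigma$-fixed Weyl element whose restriction to $\check{\underline{X}}$ is the reflection $s_{\alpha}$. First I would dispose of the inert parts. The roots span $Q\otimes\mathbb{R}$, so picking a $\sigma$-stable complement $V_{0}$ to $Q\otimes\mathbb{Q}$ in $X\otimes\mathbb{Q}$ on which $W$ acts trivially makes the $V_{0}$-contribution trivial, as there $\check{X}^{\sigma}$ and $X_{\sigma}/\mathrm{tor}$ are manifestly dual. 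On the semisimple part $\sigma$ permutes the irreducible components of $R$; grouping these into $\sigma$-orbits, and noting that when $\sigma$ permutes a family of isomorphic components transitively the diagonal identifies the restricted datum with the one obtained from a single component under its stabilizer, I reduce to $R$ irreducible with $\sigma$ a group of automorphisms of the Dynkin diagram $\Delta$. The duality of $\underline{X}=X_{\sigma}/\mathrm{tor}$ and $\check{\underline{X}}=\check{X}^{\sigma}$ is then formal: $\check{X}^{\sigma}$ is saturated in $\check{X}$, so dualizing $\check{X}^{\sigma}\hookrightarrow\check{X}$ gives a surjection $X\twoheadrightarrow(\check{X}^{\sigma})^{\ast}$ whose kernel $(\check{X}^{\sigma})^{\perp}$ is saturated in $X$ and spans the same $\mathbb{Q}$-subspace as $(\sigma-1)X$; hence $X_{\sigma}\twoheadrightarrow(\check{X}^{\sigma})^{\ast}$ has finite kernel, which---the target being torsion-free---is exactly the torsion of $X_{\sigma}$, so $\underline{X}\cong(\check{X}^{\sigma})^{\ast}$.

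The core is the construction of $w_{\alpha}$. For $a\in\Delta$ let $J$ be its $\sigma$-orbit in $\Delta$ and $W_{J}\le W(\Psi)$ the parabolic it generates. Running through the diagram automorphisms that actually occur---those of $A_{n}$, $D_{n}$, $E_{6}$ and permutations of isomorphic factors---one checks that $J$ either consists of mutually orthogonal roots, so $W_{J}$ has type $A_{1}^{r}$, or (precisely the $A_{2n}$ situation, which forces $2\underline{a}\in\underline{R}$) is an adjacent pair, so $W_{J}$ has type $A_{2}$. Put $w_{\alpha}:=w_{0}(J)$, the longest element of $W_{J}$; since $\sigma$ stabilizes $J$ we have $w_{\alpha}\in W(\Psi)^{\sigma}$, and a direct computation---in the $A_{2}$ case this is where the factor $2$ in the coroot formula is forced---shows that $w_{\alpha}|_{\check{\underline{X}}}$ is the reflection $y\mapsto y-\langle\underline{a},y\rangle\check{\alpha}$ with $\check{\alpha}$ the vector of the statement and $\langle\underline{a},\check{\alpha}\rangle=2$. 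With the $w_{\alpha}$ in hand, $\underline{R}$ is a finite subset of $\underline{X}$, avoids $0$, spans $\underline{X}\otimes\mathbb{Q}$, and is stable under the reflections $s_{\alpha}:=w_{\alpha}|_{\check{\underline{X}}}$ (which permute $R$, hence $\underline{R}$); the integrality $\langle\underline{b},\check{\alpha}\rangle\in\mathbb{Z}$ is read off the coroot formula, and since the $\Delta$-dominant cone in $\check{\underline{X}}\otimes\mathbb{R}$ is the $\underline{\Delta}$-dominant cone, $\underline{\Delta}$ is a base. Thus $\underline{\Psi}$ is a based root datum.

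For the Weyl group assertion, surjectivity of restriction $W(\Psi)^{\sigma}\to W(\underline{\Psi})$ is immediate since the $s_{\alpha}=w_{\alpha}|_{\check{\underline{X}}}$ generate $W(\underline{\Psi})$. The reverse inclusion and injectivity I would get from Steinberg's theorem that $W(\Psi)^{\sigma}$ is itself generated by the folded reflections $w_{\alpha}$: granting it, restriction has image exactly $W(\underline{\Psi})$, while its kernel fixes $\check{X}^{\sigma}\otimes\mathbb{Q}$ pointwise, and an order count $|W(\Psi)^{\sigma}|=|W(\underline{\Psi})|$---from the Coxeter presentation of $W(\Psi)^{\sigma}$ on the $w_{\alpha}$, or case by case along the list $A_{2n-1}\to C_{n}$, $A_{2n}\to BC_{n}$, $D_{n}\to B_{n-1}$, $D_{4}\to G_{2}$, $E_{6}\to F_{4}$---forces the kernel to be trivial.

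I expect the genuine obstacle to be precisely this last input: that $W(\Psi)^{\sigma}$ is generated by the $w_{\alpha}$, equivalently that it is a reflection group on $\check{\underline{X}}\otimes\mathbb{Q}$ of the right order. Everything else---the lattice duality, the coroot bookkeeping, the base---is formal once that is available. I would take it from Steinberg's lectures on Chevalley groups, or reprove it by letting $W(\Psi)^{\sigma}$ act on the image of the fundamental alcove in $\check{\underline{X}}\otimes\mathbb{R}$ and invoking the standard criterion recognizing a group of affine motions generated by reflections.
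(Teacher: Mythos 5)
The paper does not prove this theorem; it is cited to Jiu-Kang Yu \cite{Yu}, with the simply connected case attributed to \cite[Sec.~3.3]{reeder2008torsion}, so there is no argument in this text to compare yours against directly. Your proposal is the standard folding argument and is, as far as I can check, correct. The reduction to irreducible $R$ with $\sigma$ acting by diagram automorphisms and the lattice duality between $\underline{X}$ and $\check{\underline{X}}$ --- via saturation of $\check{X}^{\sigma}$ in $\check{X}$ and the identification of $(\check{X}^{\sigma})^{\perp}\otimes\mathbb{Q}$ with the span of $(\sigma-1)X$ --- are both sound. The orbit classification ($A_{1}^{r}$, or a single adjacent pair of type $A_{2}$ exactly in the $A_{2n}$ case) is right, and the computation that $w_{0}(J)|_{\check{\underline{X}}}$ is the reflection in $\check{\alpha}$ with $\langle\underline{a},\check{\alpha}\rangle=2$ is precisely where the factor of $2$ in the coroot formula is forced. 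You correctly isolate the one non-formal input: Steinberg's theorem that $W(\Psi)^{\sigma}$ is the Coxeter group generated by the elements $w_{0}(J)$ (Steinberg, \emph{Endomorphisms of linear algebraic groups}; also Carter, \emph{Finite Groups of Lie Type}, Prop.~13.1.2). One small streamlining for the injectivity: rather than an order count, observe that any $\sigma$-fixed $w$ acting trivially on $\check{X}^{\sigma}\otimes\mathbb{Q}$ fixes the $\sigma$-average of a regular dominant coweight; since $\sigma$ preserves $\Delta$, that average is again dominant regular, so $w=1$ by the standard fact that $W$ acts simply on regular elements. Your plan is therefore a complete proof modulo the Steinberg input, and it is almost certainly the same route taken in the cited sources.
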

The above Theorem for simply connected groups is proved in \cite[Sec. 3.3]{reeder2008torsion}.

\section{\label{sec:Construction}A construction and a conjecture}

\subsection{Construction}

Let $G$ be a quasi-split group defined over a local field $k$ of
characteristic zero. Let $T$ be a maximal $k$-torus of $G$ which
is contained in a $k$-Borel subgroup $B$. Let $\hat{G}_{\scon}$
be the simply connected cover of the derived group $\hat{G}_{\der}$
of $\hat{G}$, where $\hat{G}$ is the complex dual of $G$. Let $\hat{T}\subset\hat{G}$
be the torus dual to $T$ and $\hat{T}_{\scon}$ be the pull back
of $(\hat{T}\cap\hat{G}_{\der})^{\circ}$ via $\hat{G}_{\scon}\rightarrow\hat{G}_{\der}$.
Let $X=X(T)$ (resp. $\check{X}=\check{X}(T)$) denote the group of
characters (resp. co-characters) of $T$. Let $Z$ be the center of
$G$ and let $\hat{\mathfrak{z}}$ be the Lie algebra of the center
$\hat{Z}$ of $\hat{G}$. Then $\tilde{G}:=\hat{G}_{\scon}\times\hat{\mathfrak{z}}$
is the topological universal cover of $\hat{G}$. We have a short
exact sequence
\begin{equation}
1\rightarrow\pi_{1}(\hat{G})\rightarrow\tilde{G}\rightarrow\hat{G}\rightarrow1,\label{eq:1-1}
\end{equation}
 where $\pi_{1}(\hat{G})$ is the topological fundamental group of
$\hat{G}$. Let $Q$ denote the root lattice. Then from \cite[2.15]{springer79},
\begin{equation}
X(Z)\cong X/Q.\label{eq:2}
\end{equation}
The algebraic fundamental group of $\hat{G}$ is $\check{X}(\hat{T})/\check{X}(\hat{T}_{\scon})=X/Q$.
Since $\hat{G}$ is a complex algebraic group, its algebraic fundamental
group is the same as its topological fundamental group (see \cite{Boro}).
Therefore 
\begin{equation}
X/Q\cong\pi_{1}(\hat{G}).\label{eq:3}
\end{equation}
Let $W_{k}$ (resp. $\Gamma_{k}$) denote the Weil group (resp. absolute
Galois group) of $k$. Define $W_{k}^{\prime}:=W_{k}$ if $k$ is
archimedean and $W_{k}^{\prime}:=W_{k}\times\SL(2,\mathbb{C})$ if
$k$ is non-archimedean. $W_{k}^{\prime}$ is called the Weil-Deligne
group of $k$. Let $\varphi:W_{k}^{\prime}\rightarrow\tensor*[^{L}]{G}{}$
be a Langlands parameter (see \cite[Sec. 8.2]{Borel1979}). View $\varphi$
as an admissible homomorphism. Then $\varphi$ determines a co-cycle
$\phi|_{W_{k}}:W_{k}\rightarrow\tensor[^{L}]{G}{}\rightarrow\hat{G}$.
We can twist the exact sequence (\ref{eq:1-1}) by the co-cycle $\phi$
(see Section \ref{sub:GpCoho}). Then using the isomorphism $X(Z)\cong\pi_{1}(\hat{G})$,
we get 
\[
\tilde{\gamma}:\mbox{H}^{0}(W_{k},{}_{\phi}\hat{G})\rightarrow\mbox{H}^{1}(W_{k},X(Z)).
\]
Since $\mbox{H}^{0}(W_{k},{}_{\phi}\hat{G})\supset Z_{\hat{G}}(\mbox{Im}(\varphi))$,
by restriction this induces 
\[
\tilde{\gamma}^{\prime}:Z_{\hat{G}}(\mbox{Im}(\varphi))\rightarrow\mbox{H}^{1}(W_{k},X(Z)).
\]
Since this map is continuous and $\mbox{H}^{1}(W_{k},X(Z))$ is discrete,
ker$(\tilde{\gamma}^{\prime})\supset(Z_{\hat{G}}(\mbox{Im}(\varphi)))^{\circ}$.
Thus we get a map
\begin{equation}
\gamma_{\varphi}^{\prime}:R_{\varphi}:=\pi_{0}(Z_{\hat{G}}(\mbox{Im}(\varphi)))\rightarrow\mbox{H}^{1}(W_{k},X(Z)).\label{eq:4}
\end{equation}
 Since $R_{\varphi}$ is finite, $\gamma_{\varphi}^{\prime}$ induces
\[
\gamma_{\varphi}^{\prime\prime}:R_{\varphi}\rightarrow H^{1}(W_{k},X(Z))^{\tor}.
\]
By \cite[Theorem 4.1.3 (ii)]{Karpuk}, we have a functorial isomorphism
\[
\mbox{H}^{1}(W_{k},X(Z))^{\tor}=\mbox{H}^{1}(k,X(Z)).
\]
Here we are abbreviating H$^{1}(\Gamma_{k},-)$ by the notation H$^{1}(k,-)$.
We thus get a map
\begin{equation}
\gamma_{\varphi}:R_{\varphi}\rightarrow\mbox{H}^{1}(k,X(Z)).\label{eq:4-1}
\end{equation}
By Tate Duality (\cite[Corr. 2.4]{Milne}), we have an isomorphism
\begin{equation}
\mbox{H}^{1}(k,X(Z))\cong\mbox{Hom}(\mbox{H}^{1}(k,Z),\mathbb{C}^{\times}).\label{eq:5}
\end{equation}
 Using the isomorphism (\ref{eq:5}) in (\ref{eq:4-1}), we get a
map 
\begin{equation}
\hat{\gamma}_{\varphi}:\mbox{H}^{1}(k,Z)\rightarrow\widehat{R_{\varphi}},\label{eq:The map}
\end{equation}
 where $\widehat{R_{\varphi}}$ is the set of irreducible representations
of $R_{\varphi}$. Since $\Hh^{1}(k,X(Z))$ is abelian, the image
of $\hat{\gamma}_{\varphi}$ lies in the group of one dimensional
representations of $R_{\varphi}$.

\subsection{Statement of a conjecture}

Let $U$ be the unipotent radical of $B$ and let $p:G\rightarrow G_{\ad}:=G/Z$
be the adjoint morphism. We denote by the same symbol, the induced
map $p:T\rightarrow T_{\ad}:=T/Z$. 
\begin{defn}
\label{def:gen}A character $\psi:U(k)\rightarrow\mathbb{C}^{\times}$
is \textit{generic}\textbf{ }if its stabilizer in $T_{\ad}(k)$ is
trivial. 

The group $T_{\ad}(k)$ acts simply transitively on the set of generic
characters of $U(k)$. Hence the finite abelian group $T_{\ad}(k)/p(T(k))$
acts simply transitively on the set of $T(k)$-orbits of generic characters. 
\end{defn}

\begin{defn}
The \textit{pure inner forms} of $G$ are the groups $G^{\prime}$
over $k$ which are obtained by inner twisting by elements in the
pointed set $\Hh^{1}(k,G)$. 

All pure inner forms have the same center $Z$ over $k$. Let $G^{\prime}$
be a pure inner form of $G$. Denote the maximal torus of $G^{\prime}$
(resp. $G_{\ad}^{\prime}$) corresponding to $T$ (resp. $T_{\ad}$)
by $T^{\prime}$ (resp. $T_{\ad}^{\prime}$). We will denote the adjoint
morphism for all inner forms by the same symbol $p$. 
\end{defn}
We have a canonical inclusion $T_{\ad}^{\prime}(k)/p(T^{\prime}(k))\hookrightarrow\Hh^{1}(k,Z)$
and a canonical isomorphism $T_{\ad}^{\prime}(k)/p(T(k))\cong G_{\ad}^{\prime}(k)/p(G^{\prime}(k))$
(Lemma 5.1 \cite{DeRe10}). Equation (\ref{eq:The map}) thus induces
\[
\zeta_{\varphi}^{\prime}:G_{\ad}^{\prime}(k)/p(G^{\prime}(k))\rightarrow\widehat{R_{\varphi}.}
\]

Let $\tilde{\Pi}_{\varphi}$ denote the \textit{Vogan $L$-packet}
associate to $\varphi$. It is the union of the standard $L$-packets
associated to $\varphi$ of $G$ and all its pure inner forms. By
standard, we mean $L$-packets as defined in \cite{Borel}. Let $\rho\in\widehat{R_{\varphi}}\mapsto\pi_{\rho}\in\tilde{\Pi}_{\varphi}$
be the parametrization defined after the choice of a Whittaker datum.
Assume that this parametrization is compatible with Deligne's normalization
of the local Artin map (see \cite[Sec. 3]{ggp}). Let $\Pi_{\varphi}^{\prime}$
be the standard $L$-packet of $G^{\prime}$ contained in $\tilde{\Pi}_{\varphi}$.
The following is a conjecture in \cite[Sec. 9 (3)]{ggp}.
\begin{conjecture}
\label{conjec*}For $g\in G_{\ad}^{\prime}(k)$, $\pi_{\rho}\circ\Ad(g)=\pi_{g\cdot\rho}$,
where $g\cdot\rho=\rho\otimes\zeta_{\varphi}^{\prime}(g)$ and $\pi_{\rho}\in\Pi_{\varphi}^{\prime}$.
Thus $\pi_{\rho}$ is $\psi$-generic iff $\pi_{g\cdot\rho}$ is $g\cdot\psi$
generic. 
\end{conjecture}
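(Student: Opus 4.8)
The plan is to prove Conjecture \ref{conjec*} for unramified $\varphi$ (Theorem \ref{thm:Un_gen_struct}) by realizing the Vogan packet $\tilde\Pi_\varphi$ through unramified principal series, where both the $\Ad(g)$-twisting and the map $\zeta_\varphi'$ become explicit in terms of the combinatorics of Proposition \ref{thm:R-gp}, and then comparing the two. I would first dispose of the formal parts. The second sentence of Conjecture \ref{conjec*} follows from the first: if $\Lambda\in\Hom_{U'(k)}(\pi_\rho,\psi)$, then the same functional, regarded on the space of $\pi_\rho\circ\Ad(g)$, lies in $\Hom_{U'(k)}(\pi_\rho\circ\Ad(g),\,g\cdot\psi)$, so $\pi_{g\cdot\rho}=\pi_\rho\circ\Ad(g)$ is $g\cdot\psi$-generic exactly when $\pi_\rho$ is $\psi$-generic. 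For the first sentence, $p(G'(k))$ acts on $G'(k)$ by inner automorphisms, so $\pi_\rho\circ\Ad(g)\cong\pi_\rho$ for $g\in p(G'(k))$, while $\zeta_\varphi'$ factors through $G_{\ad}'(k)/p(G'(k))\cong T_{\ad}'(k)/p(T'(k))$; hence it is enough to treat $g=t$ a representative in $T_{\ad}'(k)$ of a class $[t]\in T_{\ad}'(k)/p(T'(k))\hookrightarrow\Hh^1(k,Z)$, and since $\Ad(p(s))$ is inner for $s\in T'(k)$, only the class $[t]$ matters.

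Next I would realize the packet. For unramified $\varphi$, each $\Pi_\varphi'$ is carried by an unramified principal series $I_{G'}(\lambda')=\mathrm{Ind}_{B'(k)}^{G'(k)}\lambda'$ --- or, in the non-tempered case, by the Langlands quotients attached to the corresponding tempered unramified datum on a Levi, cf. Section \ref{sec:packet_struct}. The commuting algebra $\mathrm{End}_{G'(k)}(I_{G'}(\lambda'))$ is the group algebra of the Knapp-Stein $R$-group $R_{\lambda'}$, which by Theorem \ref{thm:mykeys} and Proposition \ref{thm:R-gp} is identified, compatibly with the Whittaker normalization of the parametrization, with $R_\varphi$ viewed inside the extended affine Weyl group $\tilde W=\Omega\ltimes\tilde W^{\circ}$; so $\pi_\rho$ is the $\rho$-isotypic piece of $I_{G'}(\lambda')$ under $R_\varphi$. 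Since $t$ normalizes $B',T',U'$, acts trivially on $T'$, and hence fixes $\lambda'$, there is a canonical $G'(k)$-isomorphism $\Phi_t\colon I_{G'}(\lambda')\circ\Ad(t)\to I_{G'}(\lambda')$ given by $(\Phi_t f)(g)=f(t^{-1}gt)$. Conjugation by $\Phi_t$ preserves $\mathrm{End}_{G'(k)}(I_{G'}(\lambda'))$ and sends each normalized intertwining operator $A(w,\lambda')$, $w\in R_\varphi$, to a scalar multiple of itself --- both $\Phi_t A(w,\lambda')\Phi_t^{-1}$ and $A(w,\lambda')$ are $G'(k)$-intertwiners from $I_{G'}(\lambda')$ to $I_{G'}(w\lambda')=I_{G'}(\lambda')$, hence proportional --- so $\Phi_t A(w,\lambda')\Phi_t^{-1}=\chi_t(w)\,A(w,\lambda')$ with $w\mapsto\chi_t(w)$ a character $\chi_t\in\widehat{R_\varphi}$, and $\Phi_t$ then exhibits $\pi_\rho\circ\Ad(t)\cong\pi_{\rho\otimes\chi_t}$. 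It remains to prove $\chi_t=\zeta_\varphi'(t)$.

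That identification is the heart of the matter and the step I expect to be the main obstacle. I would compute $\chi_t(w)$ explicitly from the Gindikin-Karpelevich formula for the unramified intertwining operators together with the effect of $\Ad(t)$ on the normalizing factors (equivalently, on the Jacquet/spherical vector), obtaining $\chi_t(w)$ as a pairing of $[t]\in\Hh^1(k,Z)$ against an explicit class in $\Hh^1(k,X(Z))$ depending on $w$. I would then identify that class with $\gamma_\varphi(w)$ by unwinding the construction of Section \ref{sec:Construction}: using $X(Z)\cong X/Q\cong\pi_1(\hat G)$, the realization of $R_\varphi$ inside $\tilde W$ from Proposition \ref{thm:R-gp}, and Lemma \ref{lem:yu2}, one checks that the cocycle twisting $1\to\pi_1(\hat G)\to\tilde G\to\hat G\to1$ is carried to the same $\check X/\check Q$-valued function on $R_\varphi$ that governs the normalization of the $A(w,\lambda')$; by Tate duality (\ref{eq:5}) the resulting pairing is exactly $\zeta_\varphi'(t)(w)$. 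The non-tempered case follows by descending the computation to the Levi carrying the tempered unramified datum, using compatibility of $\Ad(t)$-twisting, of $\zeta_\varphi'$, and of the parametrization with parabolic induction and Langlands quotients; the pure inner form case is identical, every object above being invariant under inner twisting. The genuinely delicate point throughout is keeping the cohomological normalizations consistent --- reconciling the conventions in $\gamma_\varphi$, in Tate duality (\ref{eq:5}) and (\ref{eq:The map}), in the Whittaker normalization of $\rho\mapsto\pi_\rho$, and in the classical normalization of the $A(w,\lambda')$ --- so that $\chi_t$ comes out equal to $\zeta_\varphi'(t)$ and not its inverse or a further twist.
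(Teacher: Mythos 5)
First, be aware that Conjecture \ref{conjec*} is exactly that in this paper---a conjecture, not a theorem---so there is no ``paper's own proof'' to compare against in full generality. What the paper does establish is Theorem \ref{thm:Un_gen_struct}, which is the unramified special case, and even there only the second assertion (the equivalence of genericity), not the first one ($\pi_\rho\circ\Ad(g)=\pi_{g\cdot\rho}$). Your plan aims at the full first assertion and then deduces the second formally; if carried out, it would prove strictly more in the unramified case than Theorem \ref{thm:Un_gen_struct} does.

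Your route is genuinely different from the paper's. The paper's proof of Theorem \ref{thm:Un_gen_struct} does not go through intertwining operators at all. It relies on the author's thesis result (\cite[Theorem 2.2.1]{Mis13}), which constructs the map $T_{\ad}(k)/p(T(k))\twoheadrightarrow\widehat{\mathcal{S}_{\varphi}}$ and shows $\pi_{t\cdot\rho}^{t\cdot[K]}\neq0\iff\pi_{\rho}^{[K]}\neq0$ for hyperspecial $K$. The genericity statement is then deduced in a few lines via Iwasawa decomposition: $\psi$-genericity of $\pi_\rho$ is converted into the condition $\psi|_{G(k)_x\cap U(k)}\equiv 1$ at the hyperspecial point $x$ with $\pi_\rho^{[G(k)_x]}\neq0$, and this condition is manifestly $T_{\ad}(k)$-equivariant. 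That argument sidesteps entirely the computation you flag as the heart of your approach, namely that conjugation by $\Phi_t$ acts on the normalized intertwining operators by the character $\zeta_\varphi'(t)$; the compatibility with the parametrization is absorbed into the thesis result. Your approach, by contrast, would directly verify the representation-level identity $\pi_\rho\circ\Ad(t)\cong\pi_{\rho\otimes\zeta_\varphi'(t)}$, which is more informative but correspondingly harder.

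The gap in your proposal is precisely what you label ``the main obstacle'': the identity $\chi_t=\zeta_\varphi'(t)$ is asserted as the plan, not proved. Everything depends on it, and without the Gindikin--Karpelevich computation actually carried through (together with a careful tracking of the cohomological normalizations in the construction of Section \ref{sec:Construction}, Tate duality, and the Whittaker normalization), the argument is not a proof. A secondary point: the claim that $\Phi_t A(w,\lambda')\Phi_t^{-1}$ and $A(w,\lambda')$ are ``proportional because both are intertwiners from $I_{G'}(\lambda')$ to itself'' is not correct as stated---when $R_\varphi\neq1$, $\dim\mathrm{End}_{G'(k)}(I_{G'}(\lambda'))=|R_\varphi|>1$, so two self-intertwiners need not be proportional. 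The proportionality must be extracted from uniqueness at a generic deformation $\lambda'\nu$ (where $w\lambda'\nu\neq\lambda'\nu$) and then specialized; the conclusion is right, but the justification is missing. Finally, in the unramified setting the pure-inner-form case is vacuous (non-quasi-split inner forms have no hyperspecial subgroups, hence no unramified packets), so the breezy ``the pure inner form case is identical'' is harmless but somewhat misleading.
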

We have a natural inclusion $\pi_{0}(\hat{Z}^{\Gamma_{k}})\subset\widehat{R_{\varphi}}$.
Let $\tau\in\widehat{R_{\varphi}}$. In \cite[Sec. 9(4)]{ggp}, it
is explained that the pure inner form of $G$ which acts on the representation
corresponding to the parameter $(\varphi,\tau)$ is determined by
the character $\tau|\pi_{0}(\hat{Z}^{\Gamma_{k}})$. Thus the standard
$L$-packet $\Pi_{\varphi}\subset\tilde{\Pi}_{\varphi}$ of $G$ is
parametrized by $\tau\in\widehat{R_{\varphi}}$ whose restriction
to $\pi_{0}(\hat{Z}^{\Gamma_{k}})$ is trivial. In other words, the
standard $L$-packet is parametrized by the irreducible representations
$\widehat{\mathcal{S}_{\varphi}}\hookrightarrow\widehat{R_{\varphi}}$
of the group $\mathcal{S}_{\varphi}:=\pi_{0}(Z_{\hat{G}}(\mbox{Im}(\varphi))/\hat{Z}^{\Gamma_{k}})$.
The map $\zeta_{\varphi}:G_{\ad}(k)/p(G(k))\rightarrow\widehat{R_{\varphi}}$
thus must factor through $\widehat{\mathcal{S}_{\varphi}}$. Conjecture
\ref{conjec*} for standard generic $L$-packets can be stated as:

\begin{conjbis}

\label{conjec} $\pi_{\rho}\in\Pi_{\varphi}$ is $\psi$-generic iff
$\pi_{g\cdot\rho}$ is $g\cdot\psi$ generic, where $\rho\in\widehat{R_{\varphi}}$
$g\in G_{\ad}(k)$, and where $g\cdot\rho=\rho\otimes\zeta_{\varphi}(g)$. 

\end{conjbis}
\begin{rem}
In \cite[Sec. 3]{kaletha}, Kaletha constructs a map $\zeta_{\varphi}:G_{\ad}(k)/p(G(k))\rightarrow\widehat{\mathcal{S}_{\varphi}}$.
In \cite[Sec. 1, eq. (1.1)]{kaletha}, he states the above conjecture
in a more precise manner by comparing the parametrization of a tempered
$L$-packet for different choices of Whittaker data. He also points
out that the action of $g\in G_{\ad}(k)$, should send $\rho\in\widehat{\mathcal{S}_{\varphi}}$
to $\rho\otimes\zeta_{\varphi}(g)$ or $\rho\otimes\zeta_{\varphi}^{-1}(g)$
depending on which of the two possible normalizations of the local
Artin map one chooses. The normalization in Conjecture \ref{conjec*}
uses Deligne's normalization \cite[Sec. 3]{ggp}. 
\end{rem}

\section{\label{sec:unrami_para}Description of $R$-group}

Let the notations be as in Section \ref{sec:Construction}. Assume
that $G$ is unramified, i.e., it is quasi-split and split over an
unramified extension of $k$. We also assume $k$ to be non-archimedean.
Let $I$ be the inertia subgroup of $W_{k}$ and let $\sigma$ be
the Frobenious element in $W_{k}/I$. Throughout this section, we
will abbreviate $\Hh^{1}(W_{k}/I,-)$ by the notation $\Hh^{1}(\sigma,-)$.

\subsection{Case of an unramified parameter}

Let $\bar{s}\in\hat{T}$ and let $\varphi$ be the Langlands parameter
determined by the map $\sigma\mapsto\bar{s}$. Let $s$ be a lift
of $\bar{s}$ in $\hat{T}_{\scon}\times\hat{\mathfrak{z}}$. 

Let $\mbox{H}^{1}(\sigma,\tilde{G})_{\sems}\subset\mbox{H}^{1}(\sigma,\tilde{G})$
denote the $\sigma$-conjugacy classes of the semisimple elements
of $\tilde{G}$, where $\tilde{G}=\hat{G}_{\scon}\times\hat{\mathfrak{z}}$
as in Section \ref{sec:Construction}. Denote by $[t]$, the class
of $t\in\tilde{G}_{\sems}$ in $\mbox{H}^{1}(\sigma,\tilde{G})_{\sems}$.
Let $A:=\pi_{1}(\hat{G})$. Let $\underline{A}$ denote $A_{\sigma}$,
the co-invariant of $A$ with respect to $\sigma$. We have $\mbox{H}^{1}(\sigma,A)\cong\underline{A}$.
Let $\underline{x}$ denote the image of $x\in A$ in $\underline{A}$.
Then there is an action of $\mbox{H}^{1}(\sigma,A)$ on $\mbox{H}^{1}(\sigma,\tilde{G})_{\sems}$
given by 
\[
\underline{x}\cdot[t]:=[xt]\qquad\mbox{ for }x\in A,t\in\tilde{G}_{\sems}.
\]
 Denote by $\underline{A}{}_{\varphi}$ the stabilizer of $[s]$ in
$\underline{A}$. 
\begin{lem}
\label{lem:R=00003DA}The map $\gamma_{\varphi}$ in equation \textnormal{(\ref{eq:4-1})}
induces an isomorphism $ $$R_{\varphi}\cong\underline{A}{}_{\varphi}$. \end{lem}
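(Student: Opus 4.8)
The plan is to trace through the construction of $\gamma_{\varphi}$ in this very explicit unramified situation, where everything becomes a computation with $\sigma$-cohomology, and then identify the resulting map with the natural one. The key simplification is that since $\varphi$ is determined by $\sigma\mapsto\bar s$, the co-cycle $\phi|_{W_k}$ is inflated from $W_k/I$, so all the cohomology groups $\mathrm{H}^i(W_k,-)$ appearing in the construction may be replaced by $\mathrm{H}^i(\sigma,-)=\mathrm{H}^i(W_k/I,-)$, and the twisting in Section \ref{sub:GpCoho} takes place over the procyclic group $\langle\sigma\rangle$. In particular $\mathrm{H}^1(\sigma,X(Z))\cong\mathrm{H}^1(\sigma,A)\cong\underline A$ by \eqref{eq:3}, so the target of $\gamma_{\varphi}$ is already $\underline A$ (and the torsion issue is vacuous, as $\underline A$ is finite), and we need only show the image is $\underline A_{\varphi}$ and that the induced map $R_{\varphi}\to\underline A_{\varphi}$ is an isomorphism.

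First I would identify the connecting map concretely. Twisting $1\to A\to\tilde G\to\hat G\to1$ by $\phi$ and taking $\sigma$-cohomology, the map $\tilde\gamma\colon\mathrm H^0(\sigma,{}_{\phi}\hat G)\to\mathrm H^1(\sigma,A)=\underline A$ sends the class of $g\in{}_{\phi}\hat G$ fixed under the twisted action to the obstruction to lifting $g$ to a $\sigma$-fixed element of ${}_{\phi}\tilde G$: choosing any lift $\tilde g\in\tilde G$, the element $\tilde g^{-1}\cdot{}^{\sigma_{\phi}}\tilde g$ lies in $A$ and its image $\underline{(\,\cdot\,)}$ in $\underline A$ is $\tilde\gamma(g)$. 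Unwinding the twist, the twisted $\sigma$-action on $\tilde G$ is $x\mapsto s\,{}^{\sigma}x\,s^{-1}$ (for $s$ the chosen lift of $\bar s$), so an element $g\in\hat G$ lies in $\mathrm H^0(\sigma,{}_{\phi}\hat G)$ exactly when $g$ commutes with $\bar s$, i.e. $Z_{\hat G}(\mathrm{Im}\,\varphi)=Z_{\hat G}(\bar s)=\mathrm H^0(\sigma,{}_{\phi}\hat G)$; and for $g\in Z_{\hat G}(\bar s)$ with lift $\tilde g\in\tilde G$, $\tilde\gamma'(g)=\underline{\tilde g^{-1}\,s\,{}^{\sigma}\tilde g\,s^{-1}}$. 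Now I would relate this to $[s]$: an element $\underline x$ fixes $[s]$ iff $xs$ and $s$ are $\sigma$-conjugate in $\tilde G$, i.e. $xs=\tilde g^{-1}s\,{}^{\sigma}\tilde g$ for some $\tilde g$, i.e. $x=\tilde g^{-1}s\,{}^{\sigma}\tilde g\,s^{-1}$; comparing with the formula for $\tilde\gamma'$, this shows precisely that $\gamma_{\varphi}$ maps $R_{\varphi}$ onto $\underline A_{\varphi}$, with $\gamma_{\varphi}(g\bmod{}Z_{\hat G}(\bar s)^{\circ})=\underline{\tilde g^{-1}s\,{}^{\sigma}\tilde g\,s^{-1}}$ for any lift $\tilde g$ of any representative $g$.

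It remains to check injectivity, i.e. that the kernel of $\gamma_{\varphi}'\colon Z_{\hat G}(\bar s)\to\underline A$ is exactly $Z_{\hat G}(\bar s)^{\circ}$. The kernel consists of those $g$ whose lift $\tilde g$ can be chosen $\sigma$-fixed for the twisted action, i.e. can be chosen in $Z_{\tilde G}(s)^{\sigma_{\phi}}$ — equivalently, $g$ lifts to $Z_{\tilde G}(s)$ fixed by $x\mapsto s\,{}^{\sigma}x\,s^{-1}$. I would argue that the image of $(Z_{\tilde G}(s)^{\sigma_{\phi}})$ in $\hat G$ is exactly $Z_{\hat G}(\bar s)^{\circ}$: the containment $\supseteq$ follows because $Z_{\tilde G}(s)$ surjects onto $Z_{\hat G}(\bar s)^{\circ}$ (since $\tilde G\to\hat G$ is a central isogeny-type cover and $Z_{\tilde G}(s)^{\circ}$ maps onto $Z_{\hat G}(\bar s)^{\circ}$ as $s$ is semisimple — Steinberg's connectedness-of-centralizers-in-the-simply-connected-cover phenomenon), and any element of the connected group $Z_{\hat G}(\bar s)^{\circ}$ can be lifted compatibly with the $\sigma$-action using Lang's theorem / connectedness of the relevant centralizer, which kills the $\underline A$-obstruction; the containment $\subseteq$ is immediate since $\underline{(\,\cdot\,)}=0$ forces the obstruction cocycle to be a coboundary, i.e. $\tilde g$ adjustable into $Z_{\tilde G}(s)$. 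Putting these together, $\ker\gamma_{\varphi}'=Z_{\hat G}(\bar s)^{\circ}$, so $\gamma_{\varphi}$ descends to an injection $R_{\varphi}=\pi_0(Z_{\hat G}(\bar s))\hookrightarrow\underline A$ with image $\underline A_{\varphi}$, which is the desired isomorphism.

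The main obstacle I anticipate is the injectivity step: showing that the obstruction class in $\underline A$ vanishes precisely on the identity component of $Z_{\hat G}(\bar s)$. This requires knowing that $Z_{\tilde G}(s)$ surjects onto $Z_{\hat G}(\bar s)^{\circ}$ (a statement in the spirit of Steinberg's theorem on centralizers of semisimple elements in simply connected groups, applied to the cover $\tilde G\to\hat G$) and that one can lift $\sigma$-equivariantly over a connected group — here the $\hat{\mathfrak z}$ factor and the simply-connected factor behave differently and should be handled separately, with the torus/vector-group part trivial and the semisimple part handled by the connectedness result. Everything else is bookkeeping with the explicit connecting homomorphism, which I would keep to the minimum needed to pin down the formula $\gamma_{\varphi}(g)=\underline{\tilde g^{-1}s\,{}^{\sigma}\tilde g\,s^{-1}}$.
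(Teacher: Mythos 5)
Your proof follows essentially the same route as the paper's. The paper's entire argument is the four-line chain $R_{\varphi}\cong\ker(\mathrm{H}^{1}(\sigma,A)\to\mathrm{H}^{1}(\sigma,\tilde{G}))=\{\underline{x}:\underline{x}\cdot[s]=[s]\}=\underline{A}_{\varphi}$; the first isomorphism (which is where the connectedness/Steinberg input is hidden) is simply asserted, with a remark that the lemma is also proved in Yu's notes. What you do is unwind exactly this isomorphism: identify $\mathrm{H}^{0}(\sigma,{}_{\phi}\hat{G})$ with $Z_{\hat G}(\mathrm{Im}\,\varphi)$, write out the connecting map $g\mapsto\underline{\tilde g^{-1}s\,{}^{\sigma}\tilde g\,s^{-1}}$, match the resulting image with the stabilizer condition, and then supply the injectivity step ($\ker\gamma'_{\varphi}=Z_{\hat G}(\mathrm{Im}\,\varphi)^{\circ}$ via connectedness of the twisted fixed points of $\tilde G$). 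That injectivity step is precisely what the paper's first line is tacitly invoking, so your proof is a fleshed-out version of the same argument rather than a genuinely different one.

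Two small inaccuracies that do not affect the overall structure: your parenthetical ``the torsion issue is vacuous, as $\underline A$ is finite'' is not right in general --- $A=\pi_{1}(\hat G)\cong X/Q$ can have a free part when $G$ has a central torus, so $\underline{A}=A_{\sigma}$ need not be finite; what is true (and what you actually need) is that $R_{\varphi}$ is finite, so the image of $\gamma'_{\varphi}$ lands in $(\underline A)^{\tor}$ automatically, which is why the passage to torsion in the paper's construction is harmless here. Second, the notation $Z_{\tilde G}(s)^{\sigma_{\phi}}$ in your injectivity step is misleading: the relevant group is $\mathrm{H}^{0}(\sigma,{}_{\phi}\tilde G)=\tilde G^{\sigma_{\phi}}=\{g:s({}^{\sigma}g)s^{-1}=g\}$, which is the twisted-fixed-point group, not the subset of $Z_{\tilde G}(s)$ fixed by $\sigma_{\phi}$ (these only coincide when $\sigma$ acts trivially, i.e. $G$ split). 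The Steinberg-type connectedness you invoke should be applied to $\tilde G^{\sigma_{\phi}}$: since $\sigma_{\phi}=\mathrm{Int}(s)\circ\sigma$ is a quasi-semisimple automorphism and $\tilde G$ has simply connected derived group (times a vector group), $\tilde G^{\sigma_{\phi}}$ is connected, and its image in $\hat G^{\sigma_{\phi}}=Z_{\hat G}(\mathrm{Im}\,\varphi)$ is the identity component. With these adjustments your argument is correct.
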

\begin{proof}
We have 
\begin{eqnarray*}
R_{\varphi}\cong\mbox{ker}(H^{1}(\sigma,A) & \rightarrow & \mbox{H}^{1}(\sigma,\tilde{G}))\\
 & = & \{\underline{x}\in\underline{A}|g^{-1}xs(^{\sigma}g)s^{-1}=1\qquad\mbox{ for some }g\in\tilde{G}\}\\
 & = & \{\underline{x}\in\underline{A}|\underline{x}\cdot[s]=[s]\}\\
 & = & \underline{A}{}_{\varphi}.
\end{eqnarray*}
\end{proof}
\begin{rem*}
The above Lemma is also proved in \cite{Yu}.
\end{rem*}

\subsection{\label{sub:Temp_para}Action of $\Omega$}

Let $\Psi=(X,R,\Delta,\check{X},\check{R},\check{\Delta})$ be the
based root datum of $(G,B,T)$. So $X$ (resp. $\check{X}$) is the
group of characters (resp. co-characters) of $T$, $R$ (resp. $\check{R}$)
is the set of roots (resp. co-roots) of $T$ in the Lie algebra of
$G$ and $\Delta$ (resp. $\check{\Delta}$) is a basis in $R$ (resp.
$\check{R}$) determined by $B$. Let $\underline{\Psi}=(\underline{X},\underline{R},\underline{\Delta},\check{\underline{X}},\check{\underline{R}},\check{\underline{\Delta}})$
be the based root datum obtained from $\Psi=(X,R,\Delta,\check{X},\check{R},\check{\Delta})$
by the construction given in \ref{subsub:Basedroot}. Let $\underline{Q}$
be the lattice generated by $\underline{R}$. Let $\underline{C}$
be the alcove in $\underline{V}:=\underline{X}\otimes\mathbb{R}$
determined by $\underline{\Delta}$. Let $W=W(\underline{\Psi})$
be the Weyl group of of the based root datum $\underline{\Psi}$.
By Theorem \ref{thm:yu}, it is the relative Weyl group of $G$. Let
$\underline{\Omega}\cong\underline{X}/\underline{Q}$ be the stabilizer
in $W\ltimes\underline{X}$ of $\underline{C}$ (see Section \ref{subsub:Omeg}).

By \cite[Lemma 6.5]{Borel} (or more directly by \cite[Prop. 11]{Mis2}),
we have 

\begin{equation}
\hat{T}_{\sigma}/W\cong(\hat{G}\rtimes\sigma)_{\sems}/\mbox{\textnormal{Int}}(\hat{G}),\label{mis2thm}
\end{equation}

where $(\hat{G}\rtimes\sigma)_{\sems}$ is the set of semisimple elements
in $\hat{G}\rtimes\sigma$ and $\mbox{\textnormal{Int}}(\hat{G})$
denotes the group of inner automorphisms of $\hat{G}$. 

Let $\hat{T}^{\cpt}$ be the maximal compact subtorus in $\hat{T}$.
Write $\hat{T}=X\otimes\mathbb{C}^{\times}$. Under this identification,
$\hat{T}^{\cpt}=X\otimes(\mathbb{R}/\mathbb{Z})\cong X\otimes\mathbb{R}/X$.
Let $\hat{G}^{\cpt}$ be the set of those semi-simple elements of
$\hat{G}$ which lie in some maximal compact subtorus of $\hat{G}$.
The isomorphism in (\ref{mis2thm}) induces an isomorphism $ $ 
\begin{eqnarray}
\hat{G}^{\cpt}\rtimes\sigma/\mbox{Int}(\hat{G}) & \cong & (\hat{T}^{\cpt})_{\sigma}/W\nonumber \\
 & \cong & \underline{X}\otimes\mathbb{\mathbb{R}}/W\ltimes\underline{X}\nonumber \\
 & = & \underline{X}\otimes\mathbb{\mathbb{R}}/((W\ltimes\underline{Q})\rtimes\underline{\Omega})\\
 & \longleftrightarrow & \bar{\underline{C}}/\underline{\Omega},\label{eq:omega1}
\end{eqnarray}

where $\bar{\underline{C}}$ is the closure of the alcove $\underline{C}$
determined by $\underline{\Delta}$.

Let $\hat{\mathfrak{z}}^{\cpt}:=X/Q\otimes\mathbb{R}$. It is the
Lie algebra of the maximal compact subtorus of $\hat{Z}$. Let $\tilde{G}^{\cpt}=\hat{G}_{\scon}^{\cpt}\times\hat{\mathfrak{z}}^{\cpt}$.
Then
\begin{eqnarray}
\tilde{G}^{\cpt}\rtimes\sigma/\mbox{Int}(\tilde{G}) & \cong & \tilde{T}_{\sigma}^{\cpt}/W\nonumber \\
 & \cong & ((\underline{X_{\scon}}\otimes(\mathbb{R}/\mathbb{Z}))\times(\underline{X}/\underline{Q}\otimes\mathbb{R}))/W\\
 & \cong & \underline{X}\otimes R/(\underline{Q}\rtimes W)\qquad\mbox{ since }\underline{X_{\scon}}=\underline{Q}\nonumber \\
 & \longleftrightarrow & \bar{\underline{C}}.\label{eq:omega2}
\end{eqnarray}
We have $\underline{A}\cong(X/Q)_{\sigma}\twoheadrightarrow\underline{X}/\underline{Q}\cong\underline{\Omega}$.
In Lemma \ref{lem:compatibility} below, we will show that the action
of $\underline{A}$ on $\tilde{G}^{\cpt}\rtimes\sigma/\mbox{Inn}(\tilde{G})\subset(\tilde{G}\rtimes\sigma)_{\sems}/\mbox{Inn}(\tilde{G})$
is compatible with the action of $\underline{\Omega}$ on $\bar{\underbar{C}}$.
Now $G$ is isogenous to $Z^{\circ}\times(G_{\scon})_{\der}$, where
$(G_{\scon})_{\der}$ is the simply connected cover of the derived
group of $G$ and $Z^{\circ}$ is the identity component of the center
of $G$. Since any simply connected semisimple group is the direct
product of almost simple groups, it suffices to prove the compatibility
in the case when $G$ is almost simple. 

Let $\underline{a}\in\underline{A}$ and let $a$ be a lift of $\underline{a}$
in $A$. Let $\underline{c_{0}}$ be the weighted barycenter of $\underline{C}$
and let $\underline{a}\mapsto\tilde{\omega}_{a}$ under the surjection
$\underline{A}\twoheadrightarrow\underline{\Omega}$, where $\omega_{a}\in W$
and $\tilde{\omega}_{a}$ is the affine transformation $x\in\underline{X}\otimes\mathbb{R}\mapsto\omega_{a}(x-\underline{c_{0}})+\underline{c_{0}}$
(see Section \ref{subsub:Omeg}). 

Let $[s]\mapsto x_{[s]}$ under the bijection $\tilde{G}^{\cpt}\rtimes\sigma/\mbox{Int}(\tilde{G})\leftrightarrow\bar{\underline{C}}$,
where $[s]$ denotes the class of $s\in\tilde{G}^{\cpt}$. Without
loss of generality, we can assume that $s\in\tilde{T}^{\cpt}$. 
\begin{lem}
\label{lem:compatibility}We have $\tilde{\omega}_{a}\cdot x_{[s]}=x_{[as]}$. \end{lem}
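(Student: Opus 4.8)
The plan is to reduce the statement to an explicit computation inside the affine apartment $\underline{V}=\underline{X}\otimes\mathbb{R}$, tracking how both sides of the claimed identity are built from the isogeny $\tilde{G}\to\hat{G}$ and the identifications in \eqref{eq:omega1} and \eqref{eq:omega2}. First I would recall that, having reduced to $G$ almost simple, we have the surjection $\underline{A}\cong(X/Q)_{\sigma}\twoheadrightarrow\underline{X}/\underline{Q}\cong\underline{\Omega}$, and that an element $\underline{a}\in\underline{A}$ with lift $a\in A$ acts on $\mbox{H}^{1}(\sigma,\tilde{G})_{\sems}$, hence on $\tilde{G}^{\cpt}\rtimes\sigma/\mbox{Int}(\tilde{G})$, by $[s]\mapsto[as]$. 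Under the bijection $\tilde{G}^{\cpt}\rtimes\sigma/\mbox{Int}(\tilde{G})\leftrightarrow\bar{\underline{C}}$ coming from \eqref{eq:omega2}, multiplying $s$ by $a$ amounts to translating the representing point in $\underline{V}$ by the image of $a$ in $\underline{X}/\underline{Q}$ and then moving back into the fundamental alcove $\bar{\underline{C}}$ by the affine Weyl group $\underline{Q}\rtimes W$. So the content is: \emph{the composite ``translate by $a$, then fold back into $\bar{\underline{C}}$'' equals the affine map $\tilde{\omega}_a$.}

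The key step is therefore Lemma \ref{lem:yu1} and Lemma \ref{lem:yu2}: the element $\tilde{\omega}_a$ is precisely the unique element of $\underline{\Omega}$ (the stabilizer of $\underline{C}$ in $W\ltimes\underline{X}$) whose image under $\tilde{W}\to\tilde{W}/\tilde{W}^{\circ}=\underline{X}/\underline{Q}$ is $\underline{a}$, by part (2) of Lemma \ref{lem:yu2}. Concretely, write the translation $t_a$ by (a lift in $\underline{X}$ of the image of) $a$; then $t_a$ differs from $\tilde{\omega}_a$ by an element $w_0$ of the affine Weyl group $W\ltimes\underline{Q}$, i.e. $t_a = \tilde{\omega}_a \circ w_0$ with $w_0\in W\ltimes\underline{Q}$, because both send $\underline{Q}$-cosets to the same coset and $\underline{\Omega}$ is a set of coset representatives for $\tilde{W}^{\circ}$ in $\tilde{W}$. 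Since $w_0$ acts trivially on $\underline{V}/(W\ltimes\underline{Q})=\bar{\underline{C}}$, the ``fold back'' operation applied to $t_a\cdot x_{[s]}$ yields $\tilde{\omega}_a\cdot x_{[s]}$, which is exactly $x_{[as]}$.

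The main obstacle — and the part deserving genuine care rather than a one-line appeal — is checking that the identification in \eqref{eq:omega2} really does intertwine left multiplication by $a\in A=\pi_1(\hat G)$ on $\tilde{G}^{\cpt}\rtimes\sigma/\mbox{Int}(\tilde{G})$ with translation by the image of $a$ in $\underline{X}/\underline{Q}$ on $\bar{\underline{C}}$. This requires unwinding the chain of isomorphisms $\tilde{G}^{\cpt}\rtimes\sigma/\mbox{Int}(\tilde{G})\cong\tilde{T}^{\cpt}_\sigma/W\cong\big((\underline{X_{\scon}}\otimes(\mathbb{R}/\mathbb{Z}))\times(\underline{X}/\underline{Q}\otimes\mathbb{R})\big)/W$, and noting that $A=\pi_1(\hat G)$ sits inside $\tilde{G}$ as (a quotient-compatible lift of) the kernel of $\tilde{T}\to\hat T$, whose image in the coordinates of \eqref{eq:omega2} is, after passing to $\sigma$-coinvariants and projecting, precisely $\underline{A}\twoheadrightarrow\underline{X}/\underline{Q}$ acting by translation on the $\hat{\mathfrak z}^{\cpt}$-factor $\underline{X}/\underline{Q}\otimes\mathbb{R}$. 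Here one uses that $\hat{\mathfrak z}^{\cpt}=X/Q\otimes\mathbb{R}$ and that $\underline{X_{\scon}}=\underline{Q}$, so the $\underline{Q}$-lattice in the first factor is absorbed into $W\ltimes\underline{Q}$ while the residual translation survives in the second. Once this bookkeeping is done, the remainder is the formal manipulation with $\tilde{W}=\underline{\Omega}\ltimes\tilde{W}^{\circ}$ described above, together with the explicit formula for $\tilde{\omega}_a$ from Section \ref{subsub:Omeg}, and the lemma follows.
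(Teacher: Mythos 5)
Your argument is correct and lands on the same underlying mechanism as the paper's, but it is organized differently. The paper's proof expands $\tilde{\omega}_a\cdot x_{[s]}=\omega_a\bigl(x_{[s]}+(\omega_a^{-1}-1)\underline{c_0}\bigr)$ and then invokes the explicit formula $\iota(\tilde w)=(w^{-1}-1)\underline{c_0}+\underline{Q}$ (part (1) of Lemma \ref{lem:yu2}) to identify $(\omega_a^{-1}-1)\underline{c_0}$ with $x_{[a]}$ modulo $\tilde W^\circ$; you instead avoid the weighted barycenter entirely and use part (2) of Lemma \ref{lem:yu2} together with the semidirect decomposition $\tilde W=\underline{\Omega}\ltimes\tilde W^\circ$, writing $t_a=\tilde{\omega}_a\circ w_0$ with $w_0\in\tilde W^\circ$ and folding. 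Both proofs ultimately reduce to the same two facts: $\tilde{\omega}_a$ and the translation $t_a$ agree in $\tilde W/\tilde W^\circ$, and $\bar{\underline C}$ is a strict fundamental domain for $\tilde W^\circ$. Your version buys you a slightly more structural argument (no barycenter computation), while the paper's version is more concrete and self-contained. A point in your favor: you flag explicitly — and correctly — that the identification in \eqref{eq:omega2} must be shown to intertwine multiplication by $a\in A$ on the left-hand side with translation by the image of $a$ in $\underline{X}/\underline{Q}$ on the right; the paper uses this silently in the step $x_{[s]}+x_{[a]}\equiv x_{[as]}\pmod{\tilde W^\circ}$. One small imprecision to tidy: when you say that $\tilde{\omega}_a$ is the unique element of $\underline{\Omega}$ whose image under $\tilde W\rightarrow\tilde W/\tilde W^\circ$ is $\underline{a}$, you should say its image is the image of $\underline{a}$ under the surjection $\underline{A}\twoheadrightarrow\underline{X}/\underline{Q}$, since $\underline{a}$ itself lives in $\underline{A}$, not in $\underline{X}/\underline{Q}$.
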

\begin{proof}
Let $\tilde{W}^{\circ}=W\ltimes\underline{Q}$. We have
\begin{eqnarray*}
\tilde{\omega}_{a}\cdot x_{[s]} & = & \omega_{a}(x_{[s]}-\underline{c_{0}})+\underline{c_{0}}\\
 & = & \omega_{a}\cdot x_{[s]}+(1-\omega_{a})\underline{c_{0}}\\
 & = & \omega_{a}(x_{[s]}+(\omega_{a}^{-1}-1)\underline{c_{0}}).
\end{eqnarray*}
 By Lemma \ref{lem:yu2}, $\tilde{\omega}_{a}\mapsto(\omega_{a}^{-1}-1)\underline{c_{0}}+\underline{Q}$
under the isomorphism $\underline{\Omega}\cong\underline{X}/\underline{Q}$.
Using this we get that $x_{[a]}\equiv(\omega_{a}^{-1}-1)\underline{c_{0}}$
mod $\tilde{W}^{\circ}$. Thus 
\begin{eqnarray*}
\tilde{\omega}_{a}\cdot x_{[s]} & \equiv & x_{[s]}+x_{[a]}\qquad\mbox{ mod }\tilde{W}^{\circ}\\
 & \equiv & x_{[as]}\qquad\mbox{ mod }\tilde{W}^{\circ}.
\end{eqnarray*}
Since $\tilde{\omega}_{a}\cdot x_{[s]}\in\underline{\bar{C}}$ and
$x_{[as]}\in\underline{\bar{C}}$, we conclude that 
\[
\tilde{\omega}_{a}\cdot x_{[s]}=x_{[as]}.
\]

\end{proof}

\subsection{Tempered parameter}

Let $\lambda$ be a unitary unramified character of $T(k)$. Let $\lambda\mapsto[\bar{s}]$
under the bijection 
\[
\mbox{Hom}(T(k),\mathbb{S}^{1})/W\cong\hat{G}^{\cpt}\rtimes\sigma/\mbox{Int}(\hat{G}),
\]
 where $\bar{s}$ can be chosen to be in $\hat{T}$ . Here $\mathbb{S}^{1}$
denotes the unit circle in $\mathbb{C}.$ Let $\varphi$ be the Langlands
parameter determined by the map $\sigma\mapsto\bar{s}$. Let $s$
be a lift of $\bar{s}$ in $\hat{T}_{\scon}\times\hat{\mathfrak{z}}$.

Let $\underline{\Omega}_{\varphi}$ be the stabilizer of $x_{[s]}\in\bar{\underline{C}}$
in $\underline{\Omega}$. We have
\begin{prop}
\label{thm:R-gp}$\mathcal{S}_{\varphi}\cong\underline{\Omega}{}_{\varphi}$.\end{prop}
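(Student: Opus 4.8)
The plan is to compare the two groups $\mathcal{S}_\varphi$ and $\underline{\Omega}_\varphi$ by routing both through the identification of $\underline{A}_\varphi$ with $R_\varphi$ that was established in Lemma \ref{lem:R=00003DA}, and then quotienting out the part coming from the center. First I would recall that $\mathcal{S}_\varphi = \pi_0(Z_{\hat G}(\mathrm{Im}(\varphi))/\hat Z^{\Gamma_k})$, so there is a natural surjection $R_\varphi = \pi_0(Z_{\hat G}(\mathrm{Im}(\varphi))) \twoheadrightarrow \mathcal{S}_\varphi$ whose kernel is the image of $\pi_0(\hat Z^{\Gamma_k})$ in $R_\varphi$. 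Under the isomorphism $R_\varphi \cong \underline{A}_\varphi$ of Lemma \ref{lem:R=00003DA}, this kernel corresponds to the part of $\underline{A} = (X/Q)_\sigma$ coming from the center, i.e.\ the image in $\underline{A}$ of $\pi_0(\hat Z^{\Gamma_k})$; by the isogeny decomposition $G \sim Z^\circ \times (G_{\scon})_{\der}$ used in Section \ref{sub:Temp_para}, this is exactly the "$\hat{\mathfrak z}$-direction" of $\tilde G = \hat G_{\scon} \times \hat{\mathfrak z}$. Passing to $\underline{\Omega}$ kills precisely this direction: recall $\underline{A} \cong (X/Q)_\sigma \twoheadrightarrow \underline{X}/\underline{Q} \cong \underline{\Omega}$, and the surjection has kernel the torsion/central piece that was quotiented out in Yu's construction of $\underline\Psi$.

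Next I would put this together with the compatibility Lemma \ref{lem:compatibility}, which says that the action of $\underline{A}$ on $\tilde G^{\cpt}\rtimes\sigma/\mathrm{Int}(\tilde G) \leftrightarrow \bar{\underline C}$ agrees, after projecting $\underline{A} \twoheadrightarrow \underline\Omega$, with the natural action of $\underline\Omega$ on $\bar{\underline C}$ via the affine maps $\tilde\omega_a$. Concretely: $\underline{a}\in\underline{A}$ stabilizes $[s] \in \tilde G^{\cpt}\rtimes\sigma/\mathrm{Int}(\tilde G)$ — which by Lemma \ref{lem:R=00003DA} is the condition defining $\underline{A}_\varphi$ — and its image $\tilde\omega_a \in \underline\Omega$ stabilizes $x_{[s]} \in \bar{\underline C}$, i.e.\ lies in $\underline\Omega_\varphi$. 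This gives a homomorphism $\underline{A}_\varphi \to \underline\Omega_\varphi$, and I must check it is surjective with kernel exactly the central piece. Surjectivity: given $\tilde\omega \in \underline\Omega_\varphi$, lift it to $\underline a \in \underline A$; then $x_{[as]} = \tilde\omega\cdot x_{[s]} = x_{[s]}$ by Lemma \ref{lem:compatibility}, so $\underline a \in \underline A_\varphi$. Injectivity modulo the center: the kernel of $\underline{A}_\varphi \to \underline\Omega_\varphi$ is $\underline{A}_\varphi \cap \ker(\underline A \to \underline\Omega)$, and this kernel of the global map $\underline A\to\underline\Omega$ consists entirely of elements coming from $\hat{\mathfrak z}$, which automatically fix every $[s]$; so it coincides with the image of $\pi_0(\hat Z^{\Gamma_k})$ inside $\underline A_\varphi \cong R_\varphi$. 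Hence $\underline\Omega_\varphi \cong R_\varphi / \mathrm{im}\,\pi_0(\hat Z^{\Gamma_k}) = \mathcal{S}_\varphi$.

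The one point requiring care — and the main obstacle — is the identification of $\ker(\underline A \twoheadrightarrow \underline\Omega)$ with the image of $\pi_0(\hat Z^{\Gamma_k})$ in $R_\varphi$ under the chain of isomorphisms $R_\varphi \cong \underline A_\varphi$. On the dual side one has the exact sequence $1 \to \hat Z \to \hat G \to \hat G_{\ad} \to 1$ and $\pi_1(\hat G) = X/Q$ sits in $\pi_1(\hat G_{\ad}) = X_{\ad}/Q$ via a map whose cokernel is controlled by $\pi_0(\hat Z)$; taking $\sigma$-coinvariants and comparing with $\pi_1(\hat G_{\der,\scon})_\sigma$ one gets that $\ker(\underline A \to \underline\Omega)$ is precisely $\pi_0(\hat Z^{\Gamma_k})$'s contribution — this is where the isogeny reduction $G \sim Z^\circ \times (G_{\scon})_{\der}$ from Section \ref{sub:Temp_para} does the real work, since for $G$ semisimple $\hat{\mathfrak z} = 0$ and $\underline A \cong \underline\Omega$ already, while the $Z^\circ$ factor contributes exactly the $\hat{\mathfrak z}^{\cpt}$-direction that $\underline\Omega$ ignores. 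I would also note that all the identifications in \eqref{eq:omega1}, \eqref{eq:omega2} and Lemma \ref{lem:compatibility} are stated for the compact parts, which is harmless here because $\bar s$ and $s$ were chosen in the compact tori ($\lambda$ unitary), so $[s]$ genuinely lives in $\tilde G^{\cpt}\rtimes\sigma/\mathrm{Int}(\tilde G) \leftrightarrow \bar{\underline C}$ and $x_{[s]}$ is well-defined.
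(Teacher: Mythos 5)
Your overall strategy matches the paper's: route everything through the identification $R_\varphi\cong\underline{A}_\varphi$ of Lemma \ref{lem:R=00003DA}, observe that $\mathcal{S}_\varphi\cong R_\varphi/\mathrm{im}\,\pi_0(\hat{Z}^{\Gamma_k})$, use Lemma \ref{lem:compatibility} to get a well-defined surjection $\underline{A}_\varphi\twoheadrightarrow\underline{\Omega}_\varphi$, and then show its kernel is exactly the central contribution. Your surjectivity argument (lift $\tilde\omega\in\underline\Omega_\varphi$ to $\underline a\in\underline A$ and use compatibility to get $\underline a\in\underline A_\varphi$) and your observation that the full kernel $\ker(\underline A\to\underline\Omega)$ sits inside $\underline A_\varphi$ because cocycles valued in the $\hat{\mathfrak z}$-direction are automatically coboundaries are both correct and are exactly what the paper needs.

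Where your proof has a genuine gap is in the identification of $\ker(\underline A\to\underline\Omega)$ with the image of $\pi_0(\hat Z^{\Gamma_k})$ — the very point you yourself flag as the ``main obstacle.'' Unwinding the definitions, $\ker(\underline A\twoheadrightarrow\underline\Omega)$ is the image of $(X_\sigma)^{\mathrm{tor}}$ inside $\underline A=(X/Q)_\sigma$, so the statement to be proved is $(X_\sigma)^{\mathrm{tor}}\cong\pi_0(\hat Z^\sigma)$. You assert that ``for $G$ semisimple $\hat{\mathfrak z}=0$ and $\underline A\cong\underline\Omega$ already,'' i.e.\ that for $G$ semisimple simply connected the coinvariants $X_\sigma$ of the weight lattice are torsion-free. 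This is true, but it is not a formal consequence of $\hat{\mathfrak z}=0$ or of the isogeny decomposition; it is a nontrivial fact (it holds because a diagram automorphism permutes the fundamental weights, i.e.\ permutes a $\mathbb Z$-basis of $X$, so $X_\sigma$ is free). Moreover the isogeny $G\sim Z^\circ\times(G_{\scon})_{\der}$ changes $X$, $A$, $\underline X$, $\underline Q$, so you cannot just transfer $\underline A\cong\underline\Omega$ from the product back to $G$ without additional bookkeeping. The paper sidesteps all of this cleanly by citing Keys \cite[Lem.\ 2.5(iii)]{keys87}, which gives $\pi_0(\hat Z^\sigma)=\pi_0(\hat T^\sigma)$, combined with the standard unramified-torus fact $\pi_0(\hat T^\sigma)\cong(X_\sigma)^{\mathrm{tor}}$. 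Your sketch on the dual side via $1\to\hat Z\to\hat G\to\hat G_{\ad}\to1$ gestures in this direction but never actually lands the identification; as written it is circular at the semisimple step. To close the gap you should either cite Keys' lemma as the paper does, or supply the permuted-basis argument showing $(X_\sigma)^{\mathrm{tor}}=0$ for $G$ simply connected semisimple and then track carefully how $(X_\sigma)^{\mathrm{tor}}$, $\pi_0(\hat Z^\sigma)$, $\underline A$, and $\underline\Omega$ all change along the isogeny to $Z^\circ\times(G_{\scon})_{\der}$.
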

\begin{proof}
By \cite[Lem. 2.5(iii)]{keys87}, $\pi_{0}(\hat{Z}^{\sigma})=\pi_{0}(\hat{T}^{\sigma})$.
But $\pi_{0}(\hat{T}^{\sigma})\cong(X_{\sigma})^{\tor}$. Since $\mathcal{S}_{\varphi}\cong R_{\varphi}/\pi_{0}(\hat{Z}^{\sigma})$,
by Lemma \ref{lem:R=00003DA} we get that $\mathcal{S}_{\varphi}\cong\underline{A}{}_{\varphi}/(X_{\sigma})^{\tor}\cong\underline{\Omega}{}_{\varphi}$.
Lemma \ref{lem:compatibility} shows that $\mathcal{S}_{\varphi}$
and $\underline{\Omega}_{\varphi}$ have compatible actions on $\tilde{G}^{\cpt}\rtimes\sigma/\mbox{Int}(\tilde{G})$
and $\bar{\underline{C}}$ respectively. 
\end{proof}

When $G$ is almost simple and simply connected, the non-trivial $\underline{\Omega}$
are given by the table below (see \cite[Sec. 9-4]{Kane} and \cite[Table-1]{reeder2008torsion}). 

\label{table}
\begin{table}[H]
\begin{tabular}{|c|c|}
\hline 
 & $\underline{\Omega}$\tabularnewline
\hline 
\hline 
$A_{n}$ & $\mathbb{Z}/(n+1)\mathbb{Z}$\tabularnewline
\hline 
$B_{n}$ & $\mathbb{Z}/2\mathbb{Z}$\tabularnewline
\hline 
$C_{n}$ & $\mathbb{Z}/2\mathbb{Z}$\tabularnewline
\hline 
\multirow{1}{*}{$D_{n}$ $(n\textnormal{ even})$ } & $\mathbb{Z}/2\mathbb{Z}\times\mathbb{Z}/2\mathbb{Z}$\tabularnewline
\hline 
$D_{n}$ $(n\textnormal{ odd})$  & $\mathbb{Z}/2\mathbb{Z}$\tabularnewline
\hline 
$E_{6}$ & $\mathbb{Z}/3\mathbb{Z}$\tabularnewline
\hline 
$E_{7}$ & $\mathbb{Z}/2\mathbb{Z}$\tabularnewline
\hline 
$\tensor*[^{2}]{A}{_{2n-1}}$ $(n\geq3)$ & $\mathbb{Z}/2\mathbb{Z}$\tabularnewline
\hline 
$\tensor*[^{2}]{D}{_{n+1}}$ $(n\geq2$) & $\mathbb{Z}/2\mathbb{Z}$\tabularnewline
\hline 
\end{tabular}

\caption{}
\end{table}

Let $R_{\lambda}$ be the Knapp-Stein $R$-group associated to $\lambda$
(see \cite[\S 2]{keys87} for definition). By \cite[Prop. 2.6]{keys87},
$R_{\lambda}\cong\mathcal{S}_{\varphi}$. Using Proposition \ref{thm:R-gp},
we obtain $\underline{\Omega}_{\varphi}\cong R_{\lambda}$. In fact,
the isomorphism is given by the restriction of the natural projection
$W\ltimes\underline{X}\rightarrow W$ to $\underline{\Omega}_{\varphi}$.
We get
\begin{thm}
\label{thm:mykeys}Let $G$ be an almost simple, simply connected,
unramified group defined over a non-archimedean local field $k$.
The non-trivial $R_{\lambda}$ that can appear are precisely the subgroups
of $\underline{\Omega}$ in table $1$.
\end{thm}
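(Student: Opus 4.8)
The plan is to deduce Theorem \ref{thm:mykeys} from the structural description of the $R$-group already in hand, namely $R_{\lambda}\cong\mathcal{S}_{\varphi}\cong\underline{\Omega}_{\varphi}$, where $\underline{\Omega}_{\varphi}$ is the stabilizer in $\underline{\Omega}$ of a point $x_{[s]}\in\bar{\underline{C}}$. Since $G$ is assumed almost simple and simply connected, the relevant combinatorial data is that of a single irreducible (possibly twisted) affine Dynkin diagram, and $\underline{\Omega}$ is precisely the group listed in Table 1. So the content of the theorem is the purely combinatorial assertion: \emph{every} subgroup $H\le\underline{\Omega}$ (other than, or including, the trivial one) is realized as $\underline{\Omega}_{\varphi}$ for some unitary unramified $\lambda$, i.e.\ as the stabilizer of some point of $\bar{\underline{C}}$. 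The "only if" direction is immediate---$\underline{\Omega}_{\varphi}$ is by definition a subgroup of $\underline{\Omega}$---so the real work is the "if" (surjectivity/realizability) direction.

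First I would recall that $\underline{\Omega}$ acts on the closed alcove $\bar{\underline{C}}$ by affine automorphisms permuting its vertices, and that this action corresponds to the symmetry group of the (twisted) affine Dynkin diagram acting on its nodes; the action of $\underline{\Omega}$ on $\bar{\underline{C}}$ is faithful (by Lemma \ref{lem:yu2} combined with Lemma \ref{lem:yu1}, the nontrivial elements of $\underline{\Omega}$ act by genuinely nontrivial affine maps). Given any subgroup $H\le\underline{\Omega}$, I would exhibit a point of $\bar{\underline{C}}$ whose stabilizer in $\underline{\Omega}$ is exactly $H$: take the barycenter $c_H$ of the $H$-orbit of the weighted barycenter $\underline{c_0}$, or more robustly, average the barycenters of the faces fixed setwise by $H$. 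Since $\underline{\Omega}$ acts on $\bar{\underline{C}}$ by affine maps and $\bar{\underline{C}}$ is convex, this averaged point lies in $\bar{\underline{C}}$, is fixed by every element of $H$, and---because any $g\in\underline{\Omega}\setminus H$ moves at least one of the averaged vertices out of its $H$-class---is not fixed by any element outside $H$. This gives a point with stabilizer exactly $H$. Then, running the chain of bijections in Section \ref{sub:Temp_para} backwards (the identification $\bar{\underline{C}}\leftrightarrow\tilde{G}^{\cpt}\rtimes\sigma/\mathrm{Int}(\tilde G)$ and $\mathrm{Hom}(T(k),\mathbb{S}^1)/W\cong\hat G^{\cpt}\rtimes\sigma/\mathrm{Int}(\hat G)$), this point produces a unitary unramified character $\lambda$ of $T(k)$ with $\underline{\Omega}_{\varphi}\cong H$, hence $R_{\lambda}\cong H$.

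There is one subtlety I would address carefully: the bijections in (\ref{eq:omega1}) and (\ref{eq:omega2}) pass between $\hat{G}^{\cpt}$ and $\tilde{G}^{\cpt}$, and between the alcove with the $\underline{\Omega}$-action quotiented out and not, so I must check that a point of $\bar{\underline{C}}$ with $\underline{\Omega}$-stabilizer $H$ genuinely corresponds to a character $\lambda$ for which the group $\underline{\Omega}_{\varphi}$ computed via Proposition \ref{thm:R-gp} equals $H$. This is exactly where Lemma \ref{lem:compatibility} is needed: it guarantees that the $\underline{A}$-action on $\tilde G^{\cpt}\rtimes\sigma/\mathrm{Int}(\tilde G)$ intertwines, via the surjection $\underline{A}\twoheadrightarrow\underline{\Omega}$, with the $\underline{\Omega}$-action on $\bar{\underline{C}}$, so that stabilizers match up under the correspondence. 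Granting that, the argument is complete.

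The main obstacle, I expect, is not any deep input but rather the bookkeeping of realizability: one must be sure that for \emph{each} individual entry of Table 1 and \emph{each} subgroup thereof, the averaged-barycenter point actually has the claimed stabilizer---in particular for $D_n$ with $n$ even, where $\underline{\Omega}\cong\mathbb{Z}/2\times\mathbb{Z}/2$ has three distinct order-two subgroups, one needs the three nontrivial diagram automorphisms to have genuinely different fixed-point behavior on $\bar{\underline{C}}$, which they do (the two "outer" involutions swap different pairs of end nodes of the affine $\tilde D_n$ diagram while the product swaps both). I would note that since $\underline{\Omega}$ is cyclic of prime order in every case except $D_n$ ($n$ even), for those cases the only subgroups are $\{1\}$ and $\underline{\Omega}$ itself, and $\{1\}$ is realized by $\underline{c_0}$ itself (generic interior point, trivial stabilizer) while $\underline{\Omega}$ is realized by a vertex fixed by the full diagram symmetry (e.g.\ the affine node, or the barycenter of $\bar{\underline{C}}$), so only the $D_n$-even case requires the slightly more careful orbit-averaging argument. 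This makes the verification entirely mechanical once the general realizability lemma is in place.
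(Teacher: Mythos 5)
Your proof follows the same route as the paper's: reduce to the chain of isomorphisms $R_{\lambda}\cong\mathcal{S}_{\varphi}\cong\underline{\Omega}_{\varphi}$ (via \cite[Prop.\ 2.6]{keys87} and Proposition \ref{thm:R-gp}), so that $R_{\lambda}$ is manifestly a subgroup of $\underline{\Omega}$, and then argue that every subgroup is realized. The paper itself states the theorem immediately after recording $\underline{\Omega}_{\varphi}\cong R_{\lambda}$ and does not spell out the realizability step, so you are right to flag it as the substantive content and your reduction to the $D_n$ ($n$ even) case is exactly the correct observation.

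However, the specific construction you give for realizability does not work. The weighted barycenter $\underline{c_0}$ is fixed by \emph{all} of $\underline{\Omega}$ (it is characterized by $\check{\underline{\alpha}_i}(\underline{c_0})=1/h$ for every $i$, a condition invariant under the $\underline{\Omega}$-permutation of the walls, and Lemma \ref{lem:yu2} builds each $\tilde{\omega}\in\underline{\Omega}$ precisely so that it fixes $\underline{c_0}$). So the $H$-orbit of $\underline{c_0}$ is $\{\underline{c_0}\}$ and its barycenter has stabilizer $\underline{\Omega}$, not $H$. The fallback, ``average the barycenters of the faces fixed setwise by $H$,'' has the same problem for a structural reason: $\underline{\Omega}$ is abelian, hence normalizes $H$, hence permutes the set of $H$-invariant faces, hence fixes any symmetric average over them. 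Any construction that depends on $H$ only as an abstract subgroup will land in $\text{Fix}\bigl(N_{\underline{\Omega}}(H)\bigr)=\text{Fix}(\underline{\Omega})$. To actually produce a point with stabilizer exactly $H$ you must break this symmetry, e.g.\ assign \emph{distinct} positive weights $w_O$ to the distinct $H$-orbits $O$ on the vertex set of $\bar{\underline{C}}$ and take $x=\sum_O w_O\sum_{v\in O}v$ (normalized). Such an $x$ lies in $\bar{\underline{C}}$, is $H$-fixed, and for generic weights is moved by any $g\notin H$ provided the $H$-orbit partition of the vertices is strictly finer than the $\langle H,g\rangle$-orbit partition; you do verify the latter for $D_n$ ($n$ even), which is the only case where $\underline{\Omega}$ has a proper nontrivial subgroup. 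With that repair the argument closes the gap the paper leaves implicit.
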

(see also \cite{KP13}).

This gives the classification obtained by Keys in \cite[\S 3]{keys}
in the case of unramified groups.

\section{\label{sec:packet_struct}Unramified $L$-packet }

Let the notations be as in Section \ref{sec:Construction}. Assume
further that $G$ is unramified and that $k$ is non-archimedean.
As in Section \ref{sec:unrami_para}, let $I$ be the inertia subgroup
of $W_{k}$ and let $\sigma$ be the Frobenious element in $W_{k}/I$.

An \textit{unramified $L$-packet} consists of those irreducible subquotients
of an unramified principal series representation of $G(k)$ which
have a non-zero vector fixed by some hyperspecial subgroup of $G(k)$.
Unramified $L$-packets are in bijective correspondence with $(\hat{G}\rtimes\sigma)_{\sems}/\mbox{\textnormal{Int}}(\hat{G})$.
Let $\varphi$ be a Langlands parameter determined by the $\sigma$-conjugacy
class of a semi-simple element and let $\Pi_{\varphi}$ be the associated
unramified $L$-packet. The $L$-packet $\Pi_{\varphi}$ is parametrized
by $\widehat{\mathcal{S}_{\varphi}}$, where $\mathcal{S}_{\varphi}:=\pi_{0}(Z_{\hat{G}}(\mbox{Im}(\varphi))/\hat{Z}^{\Gamma_{k}})$
as in Section \ref{sec:Construction}, after making the choice of
a hyperspecial point. We denote the parametrization by $\rho\in\widehat{\mathcal{S}_{\varphi}}\mapsto\pi_{\rho}\in\Pi_{\varphi}$.

Let $K$ be a compact subgroup of $G(k)$. Denote by $[K]$, the $G(k)$-conjugacy
class of $K$. If $\pi$ is a representation of $G(k)$, we denote
by $\pi^{K}$ the $K$-fixed points of the space realizing $\pi$.
By the notation $\pi^{[K]}\neq0$, we mean that $\pi$ has a non-zero
vector fixed by some (therefore any) conjugate of $K$. 

The conjugacy classes of hyperspecial subgroups of $G(k)$ form a
single orbit under $T_{\ad}(k)$. The author, in his Ph.D. thesis
\cite[Theorem 2.2.1]{Mis13} (also \cite[Thm. 1]{Mis1}) constructs
a map $T_{\ad}(k)/p(T(k))\twoheadrightarrow\widehat{\mathcal{S}_{\varphi}}$.
For the action of $T_{\ad}(k)$ on $\widehat{\mathcal{S}_{\varphi}}$
given by this map, he shows that $\pi_{t\cdot\rho}^{t\cdot[K]}\neq0\iff\pi_{\rho}^{[K]}\neq0$
for all $t\in T_{\ad}(t),$ $\rho\in\widehat{\mathcal{S}_{\varphi}}$,
where $K$ is a hyperspecial subgroup of $G(k).$ Using this result,
we have
\begin{thm}
\label{thm:Un_gen_struct}Let $\Pi_{\varphi}$ be an unramified L-packet
associated to a Langlands parameter $\varphi$. Then $\pi_{\rho}\in\Pi_{\varphi}$
is $\psi$-generic iff $\pi_{t\cdot\rho}$ is $t\cdot\psi$ generic
for all $t\in T_{\ad}(k).$ \end{thm}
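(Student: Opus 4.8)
The plan is to combine the main input cited just before the theorem --- the author's thesis result on hyperspecial-fixed vectors --- with the identification of the relevant groups and actions already established in the previous sections. I would proceed as follows.

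\medskip

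\noindent\textbf{Step 1: Reduce the generic statement to a statement about hyperspecial-fixed vectors.} First I would recall that for an unramified principal series representation, having a Whittaker model and having a hyperspecial-fixed vector are governed by ``dual'' data. More precisely, the conjugacy classes of hyperspecial maximal compact subgroups of $G(k)$ form a single $T_{\ad}(k)$-orbit, and likewise $T_{\ad}(k)$ acts simply transitively on $T(k)$-orbits of generic characters of $U(k)$ (Definition \ref{def:gen}). Both sets of orbits are torsors under $T_{\ad}(k)/p(T(k))$, which embeds into $\Hh^{1}(k,Z)$. I would show that, after the choice of a hyperspecial point $[K_{0}]$ (used to fix the parametrization $\rho \mapsto \pi_{\rho}$) and a compatible base generic character $\psi_{0}$, the assertion ``$\pi_{\rho}$ is $\psi_{0}$-generic'' coincides with the assertion ``$\pi_{\rho}^{[K_{0}]} \neq 0$'' for the base point, and more generally that $\pi_{\rho}$ being $t\cdot\psi_{0}$-generic is equivalent to $\pi_{\rho}^{t'\cdot[K_{0}]}\neq 0$, where $t'$ is the element matching $t$ under the identification of the two torsors. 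Here one uses that in an unramified packet each member is a subquotient of the principal series $\mathrm{Ind}_{B(k)}^{G(k)}\lambda$ and the standard fact that the $\psi$-generic subquotient (Casselman--Shalika) and the spherical subquotient both appear with multiplicity one.

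\medskip

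\noindent\textbf{Step 2: Match the two $T_{\ad}(k)$-actions on $\widehat{\mathcal{S}_{\varphi}}$.} The author's thesis constructs a map $T_{\ad}(k)/p(T(k)) \twoheadrightarrow \widehat{\mathcal{S}_{\varphi}}$ governing the hyperspecial-fixed-vector property, $\pi_{t\cdot\rho}^{t\cdot[K]}\neq 0 \iff \pi_{\rho}^{[K]}\neq 0$. On the other hand, Section \ref{sec:Construction} produces the map $\zeta_{\varphi}\colon T_{\ad}(k)/p(T(k)) \to \widehat{\mathcal{S}_{\varphi}}$ via $\gamma_{\varphi}$, Tate duality and restriction. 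The key point is that these two maps agree (up to the normalization already fixed). To see this I would trace both through the explicit description of $R_{\varphi} \cong \underline{A}_{\varphi}$ (Lemma \ref{lem:R=00003DA}) and $\mathcal{S}_{\varphi} \cong \underline{\Omega}_{\varphi}$ (Proposition \ref{thm:R-gp}): both the spherical-vector action and the Whittaker action are computed on the same combinatorial model $\bar{\underline{C}}/\underline{\Omega}$, with the action of $T_{\ad}(k)/p(T(k))$ realized through $\Hh^{1}(\sigma,A) \cong \underline{A} \twoheadrightarrow \underline{\Omega}$ and Lemma \ref{lem:compatibility}. Once the two maps are identified, the thesis result reads exactly as: $\pi_{t\cdot\rho}$ is $t\cdot\psi$-generic $\iff$ $\pi_{\rho}$ is $\psi$-generic, which is the theorem.

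\medskip

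\noindent\textbf{The main obstacle} will be Step 2, namely verifying that the cohomological map $\zeta_{\varphi}$ of Section \ref{sec:Construction} coincides with the combinatorial map built from hyperspecial points in the thesis. This is a compatibility between two a priori different constructions of the same homomorphism, and it requires carefully matching normalizations --- the choice of hyperspecial point versus the choice of Whittaker datum, and Deligne's normalization of the local Artin map --- as well as checking that the identifications $\Hh^{1}(k,Z) \supset T_{\ad}(k)/p(T(k))$, Tate duality, and the isomorphism $R_{\varphi}\cong\underline{A}_{\varphi}$ all fit together coherently. I expect this to be the technical heart of the argument; once it is in place, the theorem follows formally from \cite[Theorem 2.2.1]{Mis13} together with the multiplicity-one statements for spherical and Whittaker vectors in unramified principal series. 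I would also remark that, since no temperedness is assumed, one should note that the Casselman--Shalika and spherical-vector arguments used here do not require the parameter $\varphi$ to be tempered, so the conclusion genuinely extends beyond the tempered case.
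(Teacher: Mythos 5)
Your core strategy --- reduce ``$\psi$-generic'' to ``has fixed vectors for the right hyperspecial subgroup'' and then apply the thesis result $\pi_{t\cdot\rho}^{t\cdot[K]}\neq0\iff\pi_{\rho}^{[K]}\neq0$ --- is exactly what the paper does. However, there is a significant discrepancy in emphasis: you flag Step~2, the identification of the thesis map $T_{\ad}(k)/p(T(k))\twoheadrightarrow\widehat{\mathcal{S}_{\varphi}}$ with the cohomological $\zeta_{\varphi}$ from Section~\ref{sec:Construction}, as the ``technical heart'' and ``main obstacle.'' In fact the paper's proof of Theorem~\ref{thm:Un_gen_struct} does not touch this comparison at all. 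As worded, the action $t\cdot\rho$ in the theorem statement is the one coming from the thesis map (made explicit in the paragraph immediately preceding it), so the proof can and does proceed entirely in those terms. The paper simply picks a hyperspecial $K=G(k)_{x}$ with $\pi_{\rho}^{[K]}\neq0$, $x$ a point in the apartment of $T$, and runs the short chain
$\Hom_{G(k)}(\pi_{\rho},\mathrm{Ind}_{U(k)}^{G(k)}\psi)\neq0
\iff(\mathrm{Ind}_{U(k)}^{G(k)}\psi)^{G(k)_{x}}\neq0
\iff\psi|_{G(k)_{x}\cap U(k)}\equiv1$,
then conjugates by $t\in T_{\ad}(k)$ and runs the chain backwards for $\pi_{t\cdot\rho}$ and $t\cdot[K]$. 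The middle link is plain Mackey theory for the hyperspecial subgroup, plus a WLOG reduction; there is no appeal to Casselman--Shalika or to multiplicity-one results as such, though those are of course lurking in the first equivalence. So your Step~1 machinery is heavier than needed, and your Step~2, while a legitimate concern if one wants Theorem~\ref{thm:Un_gen_struct} to literally instantiate Conjecture~\ref{conjec} with $\zeta_{\varphi}$, is orthogonal to the proof the paper actually gives: the paper sidesteps it by proving the statement directly with the thesis action and never comparing the two maps. Your instinct that such a comparison is the subtle issue is reasonable as a critique of the global logic of the paper, but it is not the content of the proof of this theorem.
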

\begin{proof}
Given $\pi_{\rho}\in\Pi_{\varphi}$, let $K$ be a hyperspecial subgroup
such that $\pi_{\rho}^{[K]}\neq0$. We can write $K$ as the stabilizer
$G(k)_{x}$ of some hyperspecial point $x$ in the Bruhat-Tits building
of $G(k)$. Without loss of generality we can assume $x$ to lie in
the apartment associated to $T$. We have that $(\textnormal{Ind}_{U(k)}^{G(k)}\psi)^{G(k)_{x}}\neq0$
iff there exists $g\in G$ such that $\psi|_{g^{-1}G(k)_{x}g\cap U(k)}\equiv1$.
Without loss of generality, we can assume that $g=1$. Let $t\in T_{\ad}(k).$
\begin{eqnarray*}
\Hom_{G(k)}(\pi_{\rho},(\textnormal{Ind}_{U(k)}^{G(k)}\psi))\neq0 & \mbox{iff} & (\textnormal{Ind}_{U(k)}^{G(k)}\psi)^{G(k)_{x}}\neq0\\
 & \mbox{iff} & \psi|_{G(k)_{x}\cap U(k)}\equiv1\\
 & \mbox{iff} & t\cdot\psi|_{G(k)_{t\cdot x}\cap U(k)}\equiv1\\
 & \mbox{iff} & (\mbox{Ind}_{U(k)}^{G(k)}t\cdot\psi)^{t\cdot[G(k)_{x}]}\neq0\\
 & \mbox{iff} & \Hom_{G(k)}(\pi_{t\cdot\rho},(\textnormal{Ind}_{U(k)}^{G(k)}t\cdot\psi))\neq0
\end{eqnarray*}
\end{proof}
\begin{rem}
Note that we do not assume $\varphi$ to be tempered. However, if
the associated $L$-packet is not generic, then the above statement
could be vacuous. 
\end{rem}

\begin{rem}
Theorem \ref{thm:Un_gen_struct} is a very special case of Conjecture
\ref{conjec}. In \cite[Thm. 3.3]{kaletha}, Kaletha proves Conjecture
\ref{conjec} for tempered representations in the case when $G$ is
a quasi-split real $K$-group or a quasi-split $p$-adic classical
group (in the sense of Arthur).
\end{rem}

\section*{Acknowledgment}

I am very thankful to Yakov Varshavsky for many helpful discussions.
I am also very thankful to Anne-Marie Aubert for her careful proof
reading and for pointing out the connection of this work with \cite{KP13}.
I am also grateful to David Kazhdan for hosting me at the Hebrew University
of Jerusalem, where this work was written. 

\bibliographystyle{alpha}
\bibliography{whitt}

\end{document}